\newtheorem{thm}{Theorem}[section]
\newtheorem{defn}[thm]{Definition}
\newtheorem{rem}{Remark}
\newtheorem{nt}{Note}
\begin{document}
\baselineskip=0.71cm
\begin{center}
{\Large \textbf{Invariant subspaces and exact solutions for some types of scalar and coupled time-space fractional diffusion equations}}
\end{center}
\vspace{-0.5cm}
\begin{center}
P. Prakash\\
Department of Mathematics,\\
  Amrita Vishwa Vidyapeetham,\\ Coimbatore-641112, INDIA.\\
E-mail ID: vishnuindia89@gmail.com.
\end{center}
\begin{abstract}
 We explain how the invariant subspace method can be extended to a scalar and coupled system of time-space fractional partial differential equations. The effectiveness and applicability of the method have been illustrated through time-space (i) fractional diffusion-convection equation, (ii) fractional reaction-diffusion equation, (iii) fractional diffusion equation with source term, (iv) two-coupled system of fractional diffusion equation, (v) two-coupled system of fractional stationary transonic plane-parallel gas flow equation and (vi) three-coupled system of fractional Hirota-Satsuma KdV equation. Also, we explicitly presented how to derive more than one exact solution of the equations as mentioned above using the invariant subspace method.
\end{abstract}
\textbf{Key-words}\\
Time-space fractional PDEs, Invariant subspace method, Laplace transformation technique, Mittag-Leffler function.
\section{Introduction}
The subject of fractional calculus is one of the most rapidly developing areas of mathematical analysis.
The study of fractional differential equations (FDEs) has considerable popularity and importance during the past few decades, mainly due to their widespread applications in various fields of science and engineering such as fluid flow, viscoelasticity, aerodynamics, electromagnetic theory, rheology, signal processing, electrical networks and so on \cite{pi,kai,kts,rh,db, mo11,aad}. 
 In the last few decades, several analytical and numerical techniques have been developed to construct exact and numerical solutions of nonlinear differential equations.
However, the derivation of the exact solution of FDEs is not an easy task, because some properties of fractional derivatives are harder than the classical derivative.

For this reason, in recent years, both mathematicians and physicists have been paid much attention to study the exact and numerical solutions of nonlinear fractional partial differential equations (FPDEs) using various ad hoc methods, such as Lie group analysis method \cite{pra1,kdv,pra2,pra5,ba1,jef}, Adomian decomposition method \cite{ad1,Ge,mo}, homotopy decomposition method \cite{ad3}, differential transform method \cite{mo2}, function-expansion method \cite{ru1,ru2,ru3}  and so on. However, recent investigations have shown that a new analytic method based on the invariant subspace approach provides an effective tool to derive the exact solution of scalar and coupled system of time-space FPDEs. This method was originally developed by Galaktionov and Svirshchevskii \cite{gs} (see also \cite{wy,wx,wx1,qz,sp,jp,non,li,ps1}) for PDEs and was further extended by Gazizov and Kasatkin \cite{rk} (see also \cite{ra,pa1,pa2,oe1,s1,hes,pra3,pra4,ne1,sc,ps2}) for time FPDEs.

The main objective of this article is to demonstrate how the invariant subspace method provides an effective tool to derive exact solution of the following time-space FPDEs namely (i) time-space fractional diffusion-convection equation, (ii) time-space fractional reaction-diffusion equation, (iii) time-space fractional diffusion equation with source term and (iv) two-coupled system of time-space fractional diffusion equation.

Here we would like to point out that only a limited number of applications for the coupled system of time-space FPDEs have been investigated through the invariant subspace method. We also explain the applicability and effectiveness of the method have been illustrated through time-space fractional
 (i) two-coupled system of diffusion equation, (ii) two-coupled system of stationary transonic plane-parallel gas flow equation \cite{jef,gs} and (iii) three-coupled system of Hirota-Satsuma KdV equation \cite{pra2} and derived their exact solution.

The layout of this paper is as follows: In section 2, some basic concepts of fractional calculus and a brief details of the invariant subspace method for scalar and $m$-component coupled system of nonlinear time-space FPDEs in the sense of Riemann-Liouville/Caputo fractional derivative are presented. In section 3, the effectiveness of the method is illustrated by solving the above-mentioned scalar and coupled system of time-space FPDEs.
Finally, a summary of our results is given in section 4.
\section{Preliminaries}
 In this section, we would like to present some basic definitions and results related to the fractional calculus. Also, we present brief details of the invariant subspace method for scalar and coupled system of time-space FPDEs.
\begin{defn}[\cite{pi,kai}]
The Riemann-Liouville (R-L) fractional derivative of order $\alpha>0$ of the function $g \in L^{1}([a,b],\mathds{R_{+}})$ is defined by
\begin{equation*}
\dfrac{^{RL}d^{\alpha}g(t)}{dt^{\alpha}}=\left\{
                                            \begin{array}{ll}
                                              \dfrac{1}{\Gamma(n-\alpha)}\dfrac{d^{n}}{dt^{n}}\left(\int\limits^{t}_{0}\dfrac{g(s)}{(t-s)^{\alpha-n+1}}ds\right), & n-1<\alpha<n; \\
                                              g^{(n)}(t), & \alpha=n,\ n\in\mathbb{N}.
                                            \end{array}
                                          \right.
\end{equation*}
\end{defn}
\begin{nt}[\cite{pi,kai}]
The R-L fractional derivative of $g(t)=t^{\mu}$ is as follows
\begin{eqnarray}
\label{pp1}^{RL}\dfrac{d^\alpha t^\mu}{dt^{\alpha}}=\dfrac{\Gamma(\mu+1)}{\Gamma(\mu-\alpha+1)}t^{\mu-\alpha},~\alpha>0,~\mu>-1,~t>0.
\end{eqnarray}
\end{nt}
\begin{defn}[\cite{pi,kai}]
The Caputo fractional derivative of order $\alpha>0$ of the function $g\in C^{n}([a,b])$ is defined by
\begin{eqnarray}\label{zz1}
\nonumber \dfrac{d^{\alpha}g(t)}{dt^{\alpha}}
=\left\{
                                                               \begin{array}{ll}
                                                                 \dfrac{1}{\Gamma(n-\alpha)}\int\limits^{t}_{0}\dfrac{g^{(n)}(s)}{(t-s)^{\alpha-n+1}}ds, & n-1<\alpha< n;\\
                                                                 g^{(n)}(t), & \alpha=n,\ n\in\mathds{N}.
                                                               \end{array}
                                                             \right.
\end{eqnarray}
\end{defn}
\begin{nt}[\cite{pi,kai}]
The Caputo fractional derivative of $g(t)=t^{\mu}$ is as follows
\begin{eqnarray}
\label{pp2}\dfrac{d^\alpha t^\mu}{dt^{\alpha}}=\left\{
                                     \begin{array}{lp{1cm}}
                                       0, & \hbox{if $\mu\in\{0,1,\ldots,n-1\},~  n = [\alpha]+1$;} \\
                                       \dfrac{\Gamma(\mu+1)}{\Gamma(\mu-\alpha+1)}t^{\mu-\alpha}, & \hbox{if $\mu\in\mathbb{N}$ \& $\mu\geq n$  or  $\mu\notin \mathbb{N}$ \& $\mu>n-1.$}
                                     \end{array}
                                   \right.
\end{eqnarray}
\end{nt}
\begin{nt}[\cite{pi,kai}]
The Laplace transformation of Caputo fractional derivative of order $\alpha\in(n-1, n], n\in\mathds{N},$ is
\begin{equation*}
\mathrm{L}\left\{\dfrac{d^{\alpha}g(t)}{dt^{\alpha}}\right\}=s^{\alpha}\overline{g}(s)-\sum\limits^{n-1}_{k=0}s^{\alpha-k-1}g^{(k)}(0), \  \mathcal{R}e(s)>0.
\end{equation*}
\end{nt}
\begin{defn} \cite{pi,kai}
Two parametric Mittag-Leffler function is defined as
\begin{equation*}
\mathbf{E}_{\beta,\gamma}(z)=\sum\limits^{\infty}_{r=0}\dfrac{z^{r}}{\Gamma(\beta r+\gamma)},\ \beta,\gamma, z\in\mathbb{C}, \ \mathcal{R}e(\beta)>0, \mathcal{R}e(\gamma)>0.
\end{equation*}
\end{defn}
Some properties of the Mittag-Leffler function are as follows:
\begin{eqnarray*}
\label{mi1}\mathbf{E}_{1,1}(z)&=&e^{z},\\
\label{mi3}\mathbf{E}_{1,k}(z)&=&\dfrac{1}{z^{k-1}}\left[e^{z}-\sum\limits^{k-2}_{r=0}\dfrac{z^{r}}{r!}\right], \ k\in\mathbb{N}.
\end{eqnarray*}
Note that $$\mathbf{E}_{\beta,1}(z)\equiv\mathbf{E}_{\beta}(z).$$
\begin{nt} [\cite{pi,kai}]
The Laplace transformation of $t^{\gamma-1}\mathbf{E}_{\beta,\gamma}\left(\pm kt^{\beta}\right)$ is
\begin{eqnarray*}
\mathrm{L}\left\{t^{\gamma-1}\mathbf{E}_{\beta,\gamma}\left(\pm kt^{\beta}\right)\right\}=\dfrac{s^{\beta-\gamma}}{(s^{\beta}\mp k)}, \ \mathcal{R}e(s)>|k|^\frac{1}{\beta}.
\end{eqnarray*}
\end{nt}

\textbf{Caputo fractional derivative of the Mittag-Leffler functions:}
\begin{eqnarray*}
\dfrac{d^{\alpha}}{dt^{\alpha}}\left[t^{\gamma-1}\mathbf{E}_{\beta,\gamma}\left(k t^{\beta}\right)\right]&=&t^{\gamma-\alpha-1}\mathbf{E}_{\beta,\gamma-\alpha}\left(k t^{\beta}\right),\\
\dfrac{d^{\alpha}}{dt^{\alpha}}\left[\mathbf{E}_{\alpha}(kt^{\alpha})\right]&=&k\mathbf{E}_{\alpha}(kt^{\alpha}),
\end{eqnarray*}
where $\alpha,\beta,\gamma>0$ and $k\in\mathds{R}$.
\begin{thm}
If $L\left\{\varphi(t)\right\}=\bar{\varphi}(s)$ and $L\left\{\phi(t)\right\}=\bar{\phi}(s)$, then
\begin{eqnarray*}
\varphi(t)\star\phi(t)=\int\limits^{t}_{0}\varphi(t-\tau)\phi(\tau)d\tau=L^{-1}\left\{\bar{\varphi}(s)\bar{\phi}(s)\right\},
\end{eqnarray*}
where $\varphi(t)\star\phi(t)$ is called a convolution of $\varphi(t)$ and $\phi(t)$.
\end{thm}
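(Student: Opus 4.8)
The plan is to prove the identity by computing the product $\bar{\varphi}(s)\bar{\phi}(s)$ directly from the definition of the Laplace transform and recognizing the result as the transform of the convolution; applying $L^{-1}$ then yields the claim. First I would write each transform as an integral, $\bar{\varphi}(s)=\int_0^\infty e^{-su}\varphi(u)\,du$ and $\bar{\phi}(s)=\int_0^\infty e^{-sv}\phi(v)\,dv$, both valid for $\mathcal{R}e(s)$ sufficiently large, where one assumes $\varphi,\phi$ are piecewise continuous and of exponential order so that the two integrals converge absolutely. Multiplying gives the iterated integral $\bar{\varphi}(s)\bar{\phi}(s)=\int_0^\infty\!\!\int_0^\infty e^{-s(u+v)}\varphi(u)\phi(v)\,du\,dv$ over the first quadrant $\{u>0,\ v>0\}$.

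Next I would perform the change of variables $t=u+v$, $\tau=v$ (equivalently $u=t-\tau$), whose Jacobian is $1$ and which maps the quadrant bijectively onto the region $\{0<\tau<t<\infty\}$. The integral becomes $\int_0^\infty\!\!\int_0^t e^{-st}\varphi(t-\tau)\phi(\tau)\,d\tau\,dt$, and pulling the factor $e^{-st}$ out of the inner integral identifies this with $\int_0^\infty e^{-st}\big(\varphi(t)\star\phi(t)\big)\,dt=L\{\varphi(t)\star\phi(t)\}$. Taking inverse Laplace transforms of both ends of the resulting chain of equalities yields $\varphi(t)\star\phi(t)=L^{-1}\{\bar{\varphi}(s)\bar{\phi}(s)\}$, as required. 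The commutativity assertion implicit in the statement, namely $\int_0^t\varphi(t-\tau)\phi(\tau)\,d\tau=\int_0^t\varphi(\tau)\phi(t-\tau)\,d\tau$, follows immediately from the substitution $\tau\mapsto t-\tau$ in the defining integral.

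The one technical point requiring care --- and the main obstacle --- is the interchange in the order of integration used when rewriting the iterated integral over the quadrant as an iterated integral over $\{0<\tau<t<\infty\}$; this is where the Fubini--Tonelli theorem is invoked, and it is legitimate precisely because $|e^{-s(u+v)}\varphi(u)\phi(v)|$ is integrable on the quadrant under the standing hypotheses on $\varphi$ and $\phi$ for $\mathcal{R}e(s)$ large. One also needs the injectivity of the Laplace transform on the relevant function class (Lerch's theorem) in order to pass from $L\{\varphi\star\phi\}=\bar\varphi\,\bar\phi$ to the stated formula involving $L^{-1}$; apart from these two points, every remaining step is a routine change of variable.
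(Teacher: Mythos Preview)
Your argument is the standard and correct proof of the Laplace convolution theorem: write each transform as an integral, multiply, change variables $(u,v)\mapsto(t,\tau)=(u+v,v)$, and justify the interchange of integration via Fubini--Tonelli under exponential-order hypotheses. There is nothing to fault here.

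The paper, however, does not prove this statement at all. It appears in the Preliminaries section as a cited background fact (alongside the definitions of the Riemann--Liouville and Caputo derivatives and the Mittag-Leffler function), and is simply quoted for later use when solving the reduced systems of fractional ODEs by Laplace transform. So there is no ``paper's own proof'' to compare against; your write-up supplies exactly the classical argument that the paper takes for granted.
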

\subsection{Invariant subspace method for scalar and coupled FPDEs}
\subsubsection{Scalar time-space FPDE}
We consider the following generalized scalar time-space FPDE
\begin{eqnarray}\label{teq1}
\begin{aligned}
\sum_{i=0}^{m}\lambda_{i}\dfrac{\partial^{\alpha+i}u(x,t)}{\partial t^{\alpha+i}}&=\hat{G}[u(x,t)]\\
&=G\left[x,u,\dfrac{\partial^{\beta}u}{\partial x^{\beta}},\dots,\dfrac{\partial^{\beta}}{\partial x^{\beta}}\left(\dfrac{\partial^{\beta}u}{\partial x^{\beta}}\right),\dfrac{\partial^{r\beta}u}{\partial x^{r\beta}},\dfrac{\partial^{\beta+k}u}{\partial x^{\beta+k}}\right],\ \alpha, \beta>0,\, k,r\in\mathbb{N},
\end{aligned}
\end{eqnarray}
$\lambda_{i}\in\mathbb{R}$, where $\hat{G}[u]$ is a linear/nonlinear fractional differential operator. Here, $\dfrac{\partial^{\beta}}{\partial x^{\beta}}(.)$ and $\dfrac{\partial^{\alpha}}{\partial t^{\alpha}}(.)$ are space and time fractional derivatives in the R-L/Caputo sense and $\hat{G}(.)$ is a sufficiently given smooth function.
First, we define the linear space
\begin{eqnarray*}
\mathcal{W}_{n}&=&\mathfrak{L}\left\{\varphi_{1}(x),\dots,\varphi_{n}(x)\right\}\\
&=&\left\{\sum_{j=1}^{n}a_{j}\varphi_{j}(x)\large{|}\ a_{j}\in\mathbb{R},\ j=1,2,\dots,n\right\},
\end{eqnarray*}
where the functions $\varphi_{1}(x),\dots,\varphi_{n}(x)$ are linearly independent. The linear space $\mathcal{W}_n$ is said to be invariant with respect to the fractional differential operator $\hat{G}[u]$ if $\hat{G}:\mathcal{W}_{n}\rightarrow\mathcal{W}_{n}$, that is $\hat{G}[\mathcal{W}_n]\subseteq \mathcal{W}_n$ or $\hat{G}[u]\in\mathcal{W}_{n}$, for all $u\in\mathcal{W}_{n}$. This means that there exist $n$-functions $\Psi_{1}$, $\Psi_{2}$,$\dots$,$\Psi_{n}$ such that
\begin{equation*}
\hat{G}\left[\sum^{n}_{j=1}a_{j}\varphi_{j}(x)\right]=\sum^{n}_{j=1}\Psi_{j}\left(a_{1},a_{2},\dots,a_{n}\right)\varphi_{j}(x),\  \text{for}  \ a_{j}\in\mathds{R}.
\end{equation*}
\begin{thm}
Let $\mathcal{W}_{n}$ be an $n$-dimensional linear space over $\mathbb{R}$. If $\mathcal{W}_{n}$
is invariant under the fractional differential operator $\hat{G}[u]$, then the time-space FPDE \eqref{teq1} admits the following exact solution
\begin{equation}\label{teq2}
u(x,t)=A_{1}(t)\varphi_{1}(x)+A_{2}(t)\varphi_{2}(x)+\dots+A_{n}(t)\varphi_{n}(x),
\end{equation}
where the coefficients $A_{j}(t)$, $(j=1,2,\dots,n)$ satisfy the following system of FODEs
\begin{equation}
\sum^{m}_{i=0}\lambda_{i}\dfrac{d^{\alpha+i}A_{j}(t)}{dt^{\alpha+i}}=\Phi_{j}(A_{1}(t),A_{2}(t),\dots,A_{n}(t)),\ j=1,\dots,n.
\end{equation}
\end{thm}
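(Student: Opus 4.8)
The plan is to verify directly that the ansatz \eqref{teq2} reduces the fractional PDE \eqref{teq1} to the claimed system of fractional ODEs; the only structural inputs are the $\mathbb{R}$-linearity of the time-fractional derivative, the invariance hypothesis on $\mathcal{W}_n$, and the linear independence of $\varphi_1,\dots,\varphi_n$. Accordingly, I would seek a solution of the prescribed separated form and show that the PDE holds precisely when the $A_j(t)$ solve the reduced system.

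First I would substitute $u(x,t)=\sum_{j=1}^{n}A_j(t)\varphi_j(x)$ into the left-hand side of \eqref{teq1}. Since each $\varphi_j$ is independent of $t$, while $\partial^{\alpha+i}/\partial t^{\alpha+i}$ (in the R--L or Caputo sense) acts only in the $t$-variable and is linear, one obtains
\begin{equation*}
\sum_{i=0}^{m}\lambda_i\,\frac{\partial^{\alpha+i}}{\partial t^{\alpha+i}}\!\left(\sum_{j=1}^{n}A_j(t)\varphi_j(x)\right)=\sum_{j=1}^{n}\left(\sum_{i=0}^{m}\lambda_i\,\frac{d^{\alpha+i}A_j(t)}{dt^{\alpha+i}}\right)\varphi_j(x),
\end{equation*}
where the $A_j$ are sought in the regularity class ($C^{n}$, resp. the domain of the R--L operator) needed for the chosen fractional derivative, so that interchanging the finite sum with the derivative is legitimate.

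Next I would treat the right-hand side. Fixing $t$, the coefficients $A_1(t),\dots,A_n(t)$ are real numbers, so $x\mapsto u(x,t)=\sum_j A_j(t)\varphi_j(x)$ belongs to $\mathcal{W}_n$. By the invariance hypothesis $\hat G[\mathcal{W}_n]\subseteq\mathcal{W}_n$ and the defining property of invariance recalled just before the theorem, there exist functions $\Psi_1,\dots,\Psi_n$ of $n$ real arguments such that
\begin{equation*}
\hat G\!\left[\sum_{j=1}^{n}A_j(t)\varphi_j(x)\right]=\sum_{j=1}^{n}\Psi_j\big(A_1(t),\dots,A_n(t)\big)\,\varphi_j(x).
\end{equation*}
Equating this with the left-hand side computed above and using that $\varphi_1(x),\dots,\varphi_n(x)$ are linearly independent, I can separate the identity coefficientwise to get, for each $j=1,\dots,n$,
\begin{equation*}
\sum_{i=0}^{m}\lambda_i\,\frac{d^{\alpha+i}A_j(t)}{dt^{\alpha+i}}=\Psi_j\big(A_1(t),\dots,A_n(t)\big),
\end{equation*}
which is exactly the asserted system with $\Phi_j\equiv\Psi_j$. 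Conversely, any tuple $(A_1,\dots,A_n)$ solving this system produces, via \eqref{teq2}, a solution of \eqref{teq1}; the solvability of the reduced system is a matter of the general theory of fractional ODEs and is carried out explicitly — via the Laplace transform and Mittag--Leffler functions — in the examples of Section 3.

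I do not expect a deep obstacle here, as the argument is a separation/projection computation. The step requiring the most care is the reduction of the right-hand side: one must invoke invariance at the level of the whole subspace $\mathcal{W}_n$ (not merely of the individual basis functions) and then use the linear independence of the $\varphi_j$ to read off the $n$ scalar equations; one must also track the regularity of the $A_j$ needed to pull the time-fractional derivative through the finite sum, and note that the $\Psi_j$ inherit nonlinearity from $\hat G$ but are nonetheless well defined on $\mathbb{R}^n$ by invariance.
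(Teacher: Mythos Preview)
Your proposal is correct and follows essentially the same route as the paper: compute the left-hand side via linearity of the time-fractional derivative, expand the right-hand side using the invariance of $\mathcal{W}_n$, and separate coefficients by linear independence of the $\varphi_j$. The only differences are cosmetic (you call the coordinate functions $\Psi_j$ and then identify $\Phi_j\equiv\Psi_j$, and you add remarks on regularity and the converse), so no change of strategy is needed.
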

\begin{proof}
  Using the linearity of the fractional derivative with equation \eqref{teq2}, we obtain
  \begin{eqnarray}
  \begin{aligned}\label{teq5}
  \sum_{i=0}^{m}\lambda_{i}\dfrac{\partial^{\alpha+i}u(x,t)}{\partial t^{\alpha+i}}
  =\sum_{j=1}^{n}\left[\sum_{i=0}^{m}\lambda_{i}\dfrac{d^{\alpha+i}A_{j}(t)}{dt^{\alpha+i}}\right]\varphi_{j}(x).
  \end{aligned}
  \end{eqnarray}
  Let $\mathcal{W}_{n}$ be an invariant subspace with respect to the fractional differential operator $\hat{G}[u]$. Then there exist $n$ functions $\Phi_{1},\Phi_{2},\dots,\Phi_{n}$ such that
  \begin{equation}\label{teq4}
  \hat{G}\left[\sum^{n}_{j=1}a_{j}\varphi_{j}(x)\right]=\sum_{j=1}^{n}\Phi_{j}(a_{1},a_{2},\dots,a_{n})\varphi_{j}(x), \text{for}\ a_{j}\in\mathbb{R},
  \end{equation}
  where $\Phi_{j}$'s are expansion coefficients of $\hat{G}[u]\in\mathcal{W}_{n}$ corresponding to $\varphi_{j}$'s.
  From equations \eqref{teq2} and \eqref{teq4}, we have
  \begin{eqnarray}
  \begin{aligned}\label{teq3}
  \hat{G}[u(x,t)]=&\hat{G}\left[\sum_{j=1}^{n}A_{j}(t)\varphi_{j}(x)\right]\\
  =&\sum^{n}_{j=1}\Phi_{j}(A_{1}(t),\dots,A_{n}(t))\varphi_{j}(x).
  \end{aligned}
  \end{eqnarray}
  Substituting equations \eqref{teq3} and \eqref{teq5} in equation \eqref{teq1}, we have
  \begin{equation}\label{eq6}
  \sum_{j=1}^{n}\left[\sum_{i=0}^{m}\lambda_{i}\dfrac{d^{\alpha+i}A_{j}}{dt^{\alpha+i}}-\Phi_{j}(A_{1}(t),A_{2}(t),\dots,A_{n}(t))\right]\varphi_{j}(x)=0.
  \end{equation}
From equation \eqref{eq6} and using their linear independence of $\left\{\varphi_{j}(x),\ j=1,2,\dots,n\right\}$, we yield the system of FODEs
  \begin{equation}
  \sum^{m}_{i=0}\lambda_{i}\dfrac{d^{\alpha+i}A_{j}(t)}{dt^{\alpha+i}}=\Phi_{j}(A_{1}(t),A_{2}(t),\dots,A_{n}(t)),\ j=1,2,\dots,n.
  \end{equation}
\end{proof}
\subsubsection{Two-coupled system of time-space FPDEs}
Consider the following two-coupled system of time-space FPDEs
\begin{eqnarray}
\begin{aligned}\label{cct1}
\dfrac{\partial^{\alpha_{1}}u_{1}}{\partial t^{\alpha_{1}}}&=G_{1}\left(x,u_{1},u_{2},\dfrac{\partial^{\beta}u_{1}}{\partial x^{\beta}},\dfrac{\partial^{\beta}u_{2}}{\partial x^{\beta}},\dots,\dfrac{\partial^{r\beta}u_{1}}{\partial x^{r\beta}},\dfrac{\partial^{r\beta}u_{2}}{\partial x^{r\beta}},\dfrac{\partial^{\beta+k_{1}}u_{1}}{\partial x^{\beta+k_{1}}},\dfrac{\partial^{\beta+k_{1}}u_{2}}{\partial x^{\beta+k_{1}}}\right),\\
\dfrac{\partial^{\alpha_{2}}u_{2}}{\partial t^{\alpha_{2}}}&=G_{2}\left(x,u_{1},u_{2},\dfrac{\partial^{\beta}u_{1}}{\partial x^{\beta}},\dfrac{\partial^{\beta}u_{2}}{\partial x^{\beta}},\dots,\dfrac{\partial^{r\beta}u_{1}}{\partial x^{r\beta}},\dfrac{\partial^{r\beta}u_{2}}{\partial x^{r\beta}},\dfrac{\partial^{\beta+k_{2}}u_{1}}{\partial x^{\beta+k_{2}}},\dfrac{\partial^{\beta+k_{2}}u_{2}}{\partial x^{\beta+k_{2}}}\right),
\end{aligned}
\end{eqnarray}
$\alpha_{1},\alpha_{2},\beta>0$,$k_{1},k_{2},r\in\mathbb{N}$, where $G_{1}$, $G_{2}$ are generalized linear/nonlinear fractional differential operators and can be considered as given sufficient smooth functions, and $\dfrac{\partial^{\alpha}}{\partial t^{\alpha}}(.)$ and $\dfrac{\partial^{\beta}}{\partial x^{\beta}}(.)$ are time and space fractional derivatives in R-L/Caputo sense. Hereafter, we will use the following notations throughout the article
\begin{equation*}
\hat{G}_{p}[u_{1},u_{2}]=G_{p}\left(x,u_{1},u_{2},\dfrac{\partial^{\beta}u_{1}}{\partial x^{\beta}},\dfrac{\partial^{\beta}u_{2}}{\partial x^{\beta}},\dots,\dfrac{\partial^{\beta}}{\partial x^{\beta}}\left(\dfrac{\partial^{\beta}u_{1}}{\partial x^{\beta}}\right),\dfrac{\partial^{r\beta}u_{1}}{\partial x^{r\beta}},\dfrac{\partial^{r\beta}u_{2}}{\partial x^{r\beta}},\dfrac{\partial^{\beta+k_{p}}u_{1}}{\partial x^{\beta+k_{p}}},\dfrac{\partial^{\beta+k_{p}}u_{2}}{\partial x^{\beta+k_{p}}}\right),
\end{equation*}
 $u_{p}=u_{p}(x,t)$, $p=1,2.$\\
 \textbf{Estimation of invariant subspace:}
  Following the above similar procedure for scalar time-space FPDEs, we develop the following result for the two-coupled system of time-space FPDEs. First, we define the linear spaces $$\mathcal{W}_{n_{p}}^{p}=\mathfrak{L}\left\{\varphi_{1}^{p}(x),\dots,\varphi_{n_{p}}^{p}(x)\right\}\equiv\left\{\sum\limits_{j=1}^{n_{p}}a_{j}^{p}\varphi_{j}^{p}(x)\ \Big|\ a_{j}^{p}\in\mathbb{R},\ j=1,\dots,n_{p}\right\},\  p=1,2,$$  where the functions $\varphi_{1}^{p}(x),\dots,\varphi_{n_{p}}^{p}(x)$ are linearly independent. The linear spaces  $\mathcal{W}_{n_{p}}^{p}$, $p=1,2$, are called an invariant under the vector fractional differential operator $\hat{\mathbb{G}}=\left({G}_{1},{G}_{2}\right)$ if
$\hat{\mathbb{G}}:\mathcal{W}_{n_{1}}^{1}\times\mathcal{W}_{n_{2}}^{2}\rightarrow\mathcal{W}_{n_{1}}^{1}\times\mathcal{W}_{n_{2}}^{2}$, which means that
$\hat{G}_{p}:\mathcal{W}_{n_{1}}^{1}\times\mathcal{W}_{n_{2}}^{2}\rightarrow\mathcal{W}_{n_{p}}^{p}, \ p=1,2.$, that is, $\hat{G}_{p}\left[\mathcal{W}_{n_{1}}^{1}\times\mathcal{W}_{n_{2}}^{2}\right]\subseteq\mathcal{W}_{n_{p}}^{p}$ or $\hat{G}_{p}[u_1,u_2]\in\mathcal{W}_{n_{p}}^{p}$, for all $(u_1,u_2)\in\mathcal{W}_{n_{1}}^{1}\times\mathcal{W}_{n_{2}}^{2}$, $p=1,2$. Then, we have
$$\hat{G}_{p}\left[\sum\limits_{j=1}^{n_{1}}a_{j}^{1}\varphi_{j}^{1}(x),\sum\limits_{j=1}^{n_{2}}a_{j}^{2}\varphi_{j}^{2}(x)\right]=\sum\limits_{j=1}^{n_{p}}\Psi_{j}^{p}
\left(a_{1}^{1},\dots,a_{n_{1}}^{1},a_{1}^{2},\dots,a_{n_{2}}^{2}\right)\varphi_{j}^{p}(x),\ p=1,2.$$
\begin{thm}
Let $\mathcal{W}^{p}_{n_{p}}$  be
 a finite dimensional linear space over $\mathbb{R}$. If $\mathcal{W}^{p}_{n_{p}}$
is invariant with respect to the fractional differential operator $\hat{G}_{p}[u_{1},u_{2}]$, then the two-coupled system of time-space FPDE \eqref{cct1} admits the following exact solution
\begin{equation}\label{cct2}
u_{p}(x,t)=\sum^{n_{p}}_{j=1}A_{j}^{p}(t)\varphi^{p}_{j}(x),\ p=1,2,
\end{equation}
where the coefficients $A^{p}_{j}(t)$ satisfy the following system of FODEs
\begin{equation}
\dfrac{d^{\alpha_{p}}A_{j}(t)}{dt^{\alpha_{p}}}=\Phi_{j}^{p}(A_{1}^{1}(t),A_{2}^{1}(t),\dots,A_{n_{1}}^{1}(t),A_{1}^{2}(t),\dots,A_{n_{2}}^{2}(t)),\ j=1,\dots,n_{p},\ p=1,2.
\end{equation}
\end{thm}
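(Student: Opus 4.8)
The plan is to mimic, essentially verbatim, the proof of the scalar case (Theorem above) but now carried out componentwise for the pair $(u_1,u_2)$. First I would substitute the proposed ansatz \eqref{cct2}, namely $u_p(x,t)=\sum_{j=1}^{n_p}A_j^p(t)\varphi_j^p(x)$ for $p=1,2$, into the left-hand sides of the coupled system \eqref{cct1}. Using linearity of the Caputo/R--L time-fractional derivative (which commutes with the finite sum over $j$ and treats $\varphi_j^p(x)$ as constants in $t$), the left side of the $p$-th equation becomes $\sum_{j=1}^{n_p}\bigl(d^{\alpha_p}A_j^p(t)/dt^{\alpha_p}\bigr)\varphi_j^p(x)$.

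Next I would invoke the invariance hypothesis. Since $\mathcal{W}^1_{n_1}\times\mathcal{W}^2_{n_2}$ is invariant under $\hat{\mathbb{G}}=(\hat G_1,\hat G_2)$, for each $p$ there exist functions $\Phi_j^p$ of $n_1+n_2$ real arguments such that
\[
\hat G_p\!\left[\sum_{j=1}^{n_1}a_j^1\varphi_j^1(x),\ \sum_{j=1}^{n_2}a_j^2\varphi_j^2(x)\right]=\sum_{j=1}^{n_p}\Phi_j^p\!\left(a_1^1,\dots,a_{n_1}^1,a_1^2,\dots,a_{n_2}^2\right)\varphi_j^p(x).
\]
Specializing the constants $a_j^p$ to the time-dependent coefficients $A_j^p(t)$ gives $\hat G_p[u_1(x,t),u_2(x,t)]=\sum_{j=1}^{n_p}\Phi_j^p(A_1^1(t),\dots,A_{n_2}^2(t))\,\varphi_j^p(x)$, exactly as in \eqref{teq3}.

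Then I would equate the two sides of each equation of \eqref{cct1} and move everything to one side, obtaining for $p=1,2$ the identity
\[
\sum_{j=1}^{n_p}\left[\dfrac{d^{\alpha_p}A_j^p(t)}{dt^{\alpha_p}}-\Phi_j^p\bigl(A_1^1(t),\dots,A_{n_1}^1(t),A_1^2(t),\dots,A_{n_2}^2(t)\bigr)\right]\varphi_j^p(x)=0.
\]
Since $\{\varphi_1^p(x),\dots,\varphi_{n_p}^p(x)\}$ is linearly independent for each fixed $p$, every bracket must vanish, yielding the claimed system of FODEs $d^{\alpha_p}A_j^p(t)/dt^{\alpha_p}=\Phi_j^p(A_1^1(t),\dots,A_{n_2}^2(t))$ for $j=1,\dots,n_p$, $p=1,2$. (I would also note the minor typo that the coefficient in the stated FODE system should read $A_j^p(t)$ rather than $A_j(t)$, and that this reduced system is what one then solves, e.g. by Laplace transform, to produce the explicit solutions in Section 3.)

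I do not expect any genuine obstacle: the argument is a routine adaptation of the scalar proof, and the only place requiring a little care is making the two bookkeeping reductions legitimate — first that the time-fractional operator passes through the finite $x$-dependent sum (pure linearity), and second that the \emph{joint} invariance of the product space is exactly what licenses writing $\hat G_p$ of the pair as a combination of the $\varphi_j^p$ alone, with the expansion coefficients $\Phi_j^p$ depending on \emph{all} $n_1+n_2$ coefficients. Once those are in place, linear independence finishes the proof, so the "hard part" is merely notational hygiene across the two coupled components.
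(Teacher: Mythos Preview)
Your proposal is correct and follows essentially the same approach as the paper's own proof: linearity of the time-fractional derivative on the ansatz, invocation of the joint invariance to expand $\hat G_p[u_1,u_2]$ in the basis $\{\varphi_j^p\}$ with coefficients $\Phi_j^p$, substitution into \eqref{cct1}, and then linear independence of $\{\varphi_j^p\}$ to extract the reduced FODE system. Your observation about the missing superscript $p$ on $A_j(t)$ is also accurate.
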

\begin{proof}
  Using the linearity of the fractional derivative with equation \eqref{cct2}, we obtain
  \begin{eqnarray}
  \begin{aligned}\label{cct5}
  \dfrac{\partial^{\alpha_{p}}u_{p}(x,t)}{\partial t^{\alpha_{p}}}
  =\sum_{j=1}^{_{n_{p}}}\dfrac{d^{\alpha_{p}}A_{j}(t)}{dt^{\alpha_{p}}}\varphi^{p}_{j}(x),\ p=1,2.
  \end{aligned}
  \end{eqnarray}
   Let $\mathcal{W}_{n_{p}}^{p}$ be an invariant subspace under the fractional differential operator $\hat{G}[u_{1},u_{2}]$. Then there exists the functions $\Phi^{p}_{1},\Phi^{p}_{2},\dots,\Phi^{p}_{n_{p}}$, $(p=1,2)$ such that
  \begin{equation}\label{cct4}
  \hat{G}_{p}\left[\sum^{n_{1}}_{j=1}a^{1}_{j}\varphi^{1}_{j}(x),\sum^{n_{2}}_{j=1}a^{2}_{j}\varphi_{j}^{2}(x)\right]=\sum_{j=1}^{n_{p}}\Phi_{j}^{p}(a^{1}_{1},\dots,a^{1}_{n_{1}},a^{2}_{1},\dots,a^{2}_{n_{2}})\varphi_{j}^{p}(x),\ \ a_{j}^{p}\in\mathbb{R},\ p=1,2,
  \end{equation}
  where $\Phi^{p}_{j}$'s are expansion coefficients of $\hat{G}[u_{1},u_{2}]\in\mathcal{W}^{p}_{n_{p}}$ corresponding to $\varphi^{p}_{j}$'s.
  From equations \eqref{cct2} and \eqref{cct4}, we have
  \begin{eqnarray}
  \begin{aligned}\label{cct3}
  \hat{G}_{p}[u_{1}(x,t),u_{2}(x,t)]=&\hat{G}_{p}\left[\sum_{j=1}^{n_{1}}A^{1}_{j}(t)\varphi^{1}_{j}(x),\sum_{j=1}^{n_{2}}A_{j}^{2}(t)\varphi_{j}^{2}(x)\right]\\
  =&\sum^{n_{p}}_{j=1}\Phi_{j}^{p}(A_{1}^{1}(t),\dots,A_{n_{1}}^{1}(t),A_{1}^{2}(t),\dots,A_{n_{2}}^{2})\varphi_{j}(x),\ p=1,2.
  \end{aligned}
  \end{eqnarray}
  Substituting equations \eqref{cct3} and \eqref{cct5} in equation \eqref{cct1}, we have
  \begin{equation}\label{cct6}
  \sum_{j=1}^{n_{p}}\left[\dfrac{d^{\alpha_{p}}A_{j}^{p}}{dt^{\alpha_{p}}}-\Phi_{j}^{p}(A_{1}^{1}(t),A_{2}^{1}(t),\dots,A_{n_{1}}^{1}(t),A_{1}^{2}(t),A_{2}^{2}(t),\dots,A_{n_{2}}^{2}(t))\right]\varphi_{j}^{p}(x)=0,\ p=1,2.
  \end{equation}
  By their linear independence of $\left\{\varphi^{p}_{j},\ j=1,2,\dots,n_{p},\ p=1,2\right\}$, we have
  \begin{equation}
  \dfrac{d^{\alpha_{p}}A^{p}_{j}(t)}{dt^{\alpha_{p}}}=\Phi(A_{1}^{1}(t),A_{2}^{1}(t),\dots,A_{n}^{1}(t),A_{1}^{2}(t),A_{2}^{2}(t),\dots,A_{n}^{2}(t)),\ j=1,2,\dots,n_{p},\ p=1,2.
  \end{equation}
\end{proof}
 \subsubsection{$m$-coupled system of time-space FPDEs}
 Consider the following $m$-coupled system of time-space FPDEs
 \begin{equation}\label{ccm}
 \dfrac{\partial^{\alpha_{p}}\mathds{U}}{\partial t^{\alpha_{p}}}=\hat{\mathbb{G}}(\mathds{U})\equiv\left(G_{1}(\mathds{U}),\dots,G_{m}(\mathds{U})\right)\in\mathbb{R}^{m},\alpha_{p}>0,\ p=1,2,\dots,m,
 \end{equation}
 where the operators $G_{q}(.)\ (q=1,2,\dots,m)$ are generalized linear/nonlinear fractional differential operators and can be considered as sufficient smooth functions, and $\dfrac{\partial^{\alpha}}{\partial t^{\alpha}}(.)$ and $\dfrac{\partial^{\beta}}{\partial x^{\beta}}(.)$ are time and space fractional derivatives in R-L/Caputo sense, and $\mathbb{U}=(u_{1},u_{2},\dots,u_{m})\in\mathbb{R}^{m}$, $u_p=u_p(x,t)$,
 \begin{equation*}
\hat{G}_{p}[\mathds{U}]=G_{p}\left(x,u_{1},\dots,u_{m},\dfrac{\partial^{\beta}u_{1}}{\partial x^{\beta}},\dots,\dfrac{\partial^{\beta}u_{2}}{\partial x^{\beta}},\dfrac{\partial^{r\beta}u_{1}}{\partial x^{r\beta}},\dots,\dfrac{\partial^{r\beta}u_{m}}{\partial x^{r\beta}},\dots,\dfrac{\partial^{\beta+k_{p}}u_{p}}{\partial x^{\beta+k_{p}}},\dfrac{\partial^{\beta+k_{p}}u_{m}}{\partial x^{\beta+k_{p}}}\right),\
\end{equation*}
$r,k_{p}\in\mathbb{N}$, $\beta>0$, $p=1,2,\dots,m.$\\
 \textbf{Estimation of invariant subspace:}
  Proceeding the above similar procedure, we can develop the following result for an $m$-coupled system of time-space FPDEs. Here, we define the linear spaces $$\mathcal{W}_{n_{p}}^{p}=\mathfrak{L}\left\{\varphi_{1}^{p}(x),\dots,\varphi_{n_{p}}^{p}(x)\right\}\equiv\left\{\sum\limits_{j=1}^{n_{p}}a_{j}^{p}\varphi_{j}^{p}(x)\ \Big|\ (a_{j}^{p},\dots,a_{n_{p}}^{p})\in\mathds{R}^{n_{p}}\right\},\ p=1,2,\dots,m,$$
 where the functions $\varphi_{1}^{p}(x),\dots,\varphi_{n_{p}}^{p}(x)$  $(n_{p}\geq1)$ are linearly independent. The linear spaces  $\mathcal{W}_{n_{p}}^{p}$, $p=1,2,\dots,m$, are called an invariant under the vector fractional differential operator $\hat{\mathbb{G}}=\left({G}_{1},{G}_{2},\dots,G_{m}\right)$ if
 $\hat{\mathbb{G}}:\mathcal{W}_{n_{1}}^{1}\times\dots\times\mathcal{W}_{n_{m}}^{m}\rightarrow\mathcal{W}_{n_{1}}^{1}\times\dots\times\mathcal{W}_{n_{m}}^{m}$, which means that $\hat{G}_{p}:\mathcal{W}_{n_{1}}^{1}\times\dots\times\mathcal{W}_{n_{m}}^{m}\rightarrow\mathcal{W}_{n_{p}}^{p}, \ p=1,2,\dots,m$, that is, $\hat{G}_{p}\left[\mathcal{W}_{n_{1}}^{1}\times\dots\times\mathcal{W}_{n_{m}}^{m}\right]\subseteq\mathcal{W}_{n_{p}}^{p}$ or $\hat{G}_{p}[u_1,\dots,u_m]\in\mathcal{W}_{n_{p}}^{p}$, for all $(u_1,\dots,u_m)\in\mathcal{W}_{n_{1}}^{1}\times\dots\times\mathcal{W}_{n_{m}}^{m}$, $p=1,\dots,m$.
Then there exists $\Phi_{j}^{p}$, $j=1,2,\dots,n_{p}$, $p=1,2,\dots,m$, such that
$$\hat{G}_{p}\left[\sum\limits_{j=1}^{n_{1}}a_{j}^{1}\varphi_{j}^{1}(x),\dots,\sum\limits_{j=1}^{n_{m}}a_{j}^{m}\varphi_{j}^{m}(x)\right]=\sum\limits_{j=1}^{n_{p}}\Phi_{j}^{p}
\left(a_{1}^{1},\dots,a_{n_{1}}^{1},\dots,a_{1}^{m},\dots,a_{n_{m}}^{m}\right)\varphi_{j}^{p}(x),$$
for all $(a_{1}^{p},\dots,a_{n_{p}}^{p})\in\mathds{R}^{n_{p}}$, $p=1,2,\dots,m$.
\begin{thm}
Let $\mathcal{W}^{p}_{n_{p}}$ be a finite dimensional linear space over $\mathbb{R}$
and if $\mathcal{W}^{p}_{n_{p}}$ is invariant under the fractional differential operator $\hat{G}_{p}[\mathds{U}]$, then the $m$-coupled system \eqref{ccm} has a solution of the form
\begin{equation}\label{cctt2}
u_{p}(x,t)=\sum^{n_{p}}_{j=1}A_{j}^{p}(t)\varphi^{p}_{j}(x),\ p=1,2,\dots,m,
\end{equation}
where the coefficients $A^{p}_{j}(t)$ satisfy the following system of FODEs
\begin{equation}
\dfrac{d^{\alpha_{p}}A_{j}(t)}{dt^{\alpha_{p}}}=\Phi_{j}(A_{1}^{1}(t),A_{2}^{2}(t),\dots,A_{n_{2}}^{2}(t)),\ j=1,\dots,n_{p},\ p=1,2,\dots,m.
\end{equation}
\end{thm}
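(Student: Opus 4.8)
The plan is to run, componentwise over the $m$ blocks, exactly the argument already used for the scalar case (Theorem with \eqref{teq1}) and the two-coupled case. First I would substitute the ansatz \eqref{cctt2} into the left-hand side of \eqref{ccm}. Since each basis function $\varphi^p_j(x)$ is independent of $t$ and the Riemann--Liouville/Caputo operator $\partial^{\alpha_p}/\partial t^{\alpha_p}$ is linear and the sum is finite, the time derivative passes through and acts only on the coefficients, giving $\dfrac{\partial^{\alpha_p}u_p}{\partial t^{\alpha_p}}=\sum_{j=1}^{n_p}\dfrac{d^{\alpha_p}A^p_j(t)}{dt^{\alpha_p}}\varphi^p_j(x)$ for each $p=1,\dots,m$.

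Next I would treat the right-hand side using the invariance hypothesis. By assumption each $\mathcal{W}^p_{n_p}$ is invariant under $\hat{\mathbb{G}}$, so $\hat{G}_p\big[\mathcal{W}^1_{n_1}\times\cdots\times\mathcal{W}^m_{n_m}\big]\subseteq\mathcal{W}^p_{n_p}$, and hence there exist functions $\Phi^p_j$ of the $n_1+\cdots+n_m$ real expansion coefficients with $\hat{G}_p\big[\sum_j a^1_j\varphi^1_j(x),\dots,\sum_j a^m_j\varphi^m_j(x)\big]=\sum_{j=1}^{n_p}\Phi^p_j(a^1_1,\dots,a^m_{n_m})\varphi^p_j(x)$. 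Evaluating this identity at $a^q_j=A^q_j(t)$ — which is legitimate because for each fixed $t$ the tuple $(u_1(\cdot,t),\dots,u_m(\cdot,t))$ lies in $\mathcal{W}^1_{n_1}\times\cdots\times\mathcal{W}^m_{n_m}$ by the form of the ansatz — yields $\hat{G}_p[\mathds{U}]=\sum_{j=1}^{n_p}\Phi^p_j(A^1_1(t),\dots,A^m_{n_m}(t))\varphi^p_j(x)$ for $p=1,\dots,m$.

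Finally I would substitute both expressions back into \eqref{ccm}, collect terms, and obtain, for each $p$, $\sum_{j=1}^{n_p}\big[\,\dfrac{d^{\alpha_p}A^p_j}{dt^{\alpha_p}}-\Phi^p_j(A^1_1(t),\dots,A^m_{n_m}(t))\,\big]\varphi^p_j(x)=0$. Because $\{\varphi^p_1,\dots,\varphi^p_{n_p}\}$ is linearly independent for every $p$, each bracketed coefficient must vanish identically in $t$, which is precisely the stated system of fractional ODEs for the $A^p_j(t)$. Conversely, any solution of that FODE system, inserted into \eqref{cctt2}, satisfies \eqref{ccm}; so \eqref{ccm} indeed admits solutions of the claimed form.

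I do not expect any genuine analytic obstacle here: the proof is pure bookkeeping once the invariance condition is granted, and it is structurally identical to the two preceding theorems, only with $m$ summands in the product space instead of $1$ or $2$. The single point that warrants a line of care is the commutation step $\partial^{\alpha_p}_t\big(\sum_j A^p_j(t)\varphi^p_j(x)\big)=\sum_j\big(d^{\alpha_p}_tA^p_j(t)\big)\varphi^p_j(x)$, which rests only on linearity of the R--L/Caputo derivative and finiteness of the sum and is already used implicitly in the earlier results; and the well-definedness of the $\Phi^p_j$, which (as always in the invariant subspace method) uses that the representation of an element of $\mathcal{W}^p_{n_p}$ in the basis $\{\varphi^p_j\}$ is unique, again a consequence of linear independence.
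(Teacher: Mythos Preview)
Your proposal is correct and follows exactly the approach the paper intends: the paper's own proof of this statement is simply ``Similar to the proof of theorem 2.6,'' and what you have written is precisely that argument carried through componentwise for $p=1,\dots,m$. The extra remarks you include about the commutation of $\partial_t^{\alpha_p}$ with the finite sum and the uniqueness of the basis expansion are the only nontrivial points, and they are handled just as in the scalar and two-coupled cases.
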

\begin{proof}
Similar to the proof of theorem 2.6.
\end{proof}
 Let us assume that invariant subspace $W_{n_{p}}^{p}=\mathfrak{L}\left\{\varphi_{1}^{p},\dots,\varphi_{n_{p}}^{p}\right\}$ is defined as space generated by solutions of the following linear fractional order ODEs
\begin{equation*}
L_{p}[y_{p}]=y_{p}^{(\alpha)}+c_{n_{p}-1}^{p}(x)y_{p}^{(\alpha-1)}+\dots+c_{0}^{p}(x)y_{p}=0,\ n_{p}-1<\alpha\leq n_{p},\ n_{p}\in\mathbb{N},\ p=1,2,\dots,m,
\end{equation*}
where $y_{p}^{(\alpha)}=\dfrac{d^{\alpha}y_{p}}{dx^{\alpha}}$. Thus the invariant condition reads
\begin{equation*}
L_{p}\left[\hat{G}_{p}[\mathds{U}]\right]|_{[H_{1}]\cap\dots\cap[H_{m}]}=0, p=1,2,\dots,m,
\end{equation*}
where $[H_{p}]$ denotes the equation $L_{p}[u_{p}]=0$ and its differential consequences with respect to $x$.
\section{Construction of invariant subspaces and exact solutions}
\subsection{Time-space fractional diffusion-convection equation}
Consider the following time-space fractional diffusion-convection equation
\begin{equation}
\label{doce}\dfrac{\partial^{\alpha}u}{\partial t^{\alpha}}=\hat{G}[u]=\left(\dfrac{\partial^{\beta} u}{\partial x^{\beta}}\right)^{2}\left(\dfrac{\partial p}{\partial u}\right)+p(u)\dfrac{\partial^{\beta}}{\partial x^{\beta}}\left(\dfrac{\partial^{\beta}u}{\partial x^{\beta}}\right)-\dfrac{\partial^{\beta} u}{\partial x^{\beta}}\left(\dfrac{\partial q}{\partial u}\right), \ t>0, \ \ \alpha,\beta\in(0,1],
\end{equation}
where the functions $p(u)$ and $q(u)$ represent the phenomenon of diffusion and convection respectively. 
 The above PDE with $\alpha=1$ and $\beta=1$ was discussed through invariant subspace method in \cite{li,pra4}. We would like to point out that the operator $\hat{G}[u]$ admits no invariant subspace for arbitrary functions $p(u)$ and $q(u)$. Hence, we choose $p(u)=a_{n}u^{n}+a_{n-1}u^{n-1}+\dots+a_{1}u+a_{0}$ and $q(u)=b_{n+1}u^{n+1}+b_{n}u^{n}+\dots+b_{1}u+b_{0}$, $n\in\mathbb{N}$,
where $a_{n}$, $a_{n-1},\dots,a_{0},b_{n+1},\dots,b_{1},b_{0}$ are arbitrary constants.\\
Then, the equation \eqref{doce} reduces to
\begin{eqnarray}
\begin{aligned}
\label{E2}\dfrac{\partial^{\alpha}u}{\partial t^{\alpha}}=\hat{G}[u]=&\left[na_{n}u^{n-1}+(n-1)a_{n-1}u^{n-2}+\dots+a_{1}\right]\left(\dfrac{\partial^{\beta} u}{\partial x^{\beta}}\right)^{2}\\
&+\left[a_{n}u^{n}+a_{n-1}u^{n-1}+\dots+a_{1}u+a_{0}\right]\dfrac{\partial^{\beta}}{\partial x^{\beta}}\left(\dfrac{\partial^{\beta}u}{\partial x^{\beta}}\right)\\
&-\left[(n+1)b_{n+1}u^{n}+nb_{n}u^{n-1}+\dots+b_{1}\right]\dfrac{\partial^{\beta} u}{\partial x^{\beta}}, \ t>0, \ \ \alpha,\beta\in(0,1].
\end{aligned}
\end{eqnarray}
It is easy to find that the differential operator $\hat{G}[u]$ admits a one-dimensional invariant subspace $\mathcal{W}_{1}=\mathfrak{L}\left\{E_{\beta}(kx^{\beta})\right\}$, $k\in\mathbb{R}$, if $a_{r}k=b_{r+1}$, $r=1,2\dots n$, $n\in\mathbb{N}$, because
$$\hat{G}\left[A_{1}E_{\beta}(kx^{\beta})\right]=\left(a_{0}k^{2}-b_{1}k\right)A_{1} E_{\beta}(kx^{\beta})\in\mathcal{W}_{1}.$$
Thus, we can write the exact solution in the form
\begin{equation}\label{E3}
u(x,t)=A_{1}(t)E_{\beta}(kx^{\beta}),
\end{equation}
where $A_{1}(t)$ is an unknown function to be determined. Substituting \eqref{E3} in \eqref{E2}, we get
\begin{equation}\label{E4}
\dfrac{d^{\alpha}A_{1}}{dt^{\alpha}}=(a_{0}k^{2}-b_{1}k)A_{1}(t).
\end{equation}
First, we consider $\alpha=\beta=1$. In this case, we have
\begin{equation}\label{E6}
u(x,t)=k_{0}e^{(a_{0}k^{2}-b_{1}k)t+kx},\ k_{0},k,a_{0},b_{1}\in\mathbb{R}.
\end{equation}
Next, we consider $\alpha,\ \beta\in(0,1]$. Applying Laplace transformation technique on both sides of equation \eqref{E4}, we get
$$s^{\alpha}\bar{A_{1}}(s)-s^{\alpha-1}A_{1}(0)=(a_{0}k^{2}-b_{1}k)\bar{A_{1}}(s)$$
which can be written as
$$\bar{A_{1}}(s)=\dfrac{k_{0}s^{\alpha-1}}{s^{\alpha}-(a_{0}k^{2}-b_{1}k)},\ \text{where}\ ~ k_{0}=A_{1}(0).$$
Applying inverse Laplace transformation to the above equation, we get
\begin{equation*}
A_{1}(t)=k_{0} E_{\alpha}((a_{0}k^{2}-b_{1}k)t^{\alpha}).
\end{equation*}
Hence, we obtain an exact solution for time-space fractional diffusion-convection equation \eqref{E2} as follows
\begin{equation}\label{E5}
u(x,t)=k_{0}E_{\alpha}((a_{0}k^{2}-b_{1}k)t^{\alpha})E_{\beta}(kx^{\beta}),\ \alpha,\beta\in(0,1], k_{0},k,a_{0},b_{1}\in\mathbb{R}.
\end{equation}
Note that, for $\alpha=\beta=1$, equation \eqref{E5} is exactly same as \eqref{E6}. The above exact solution \eqref{E5} for $a_0=2$, $k=k_0=b_1=1$, $t=1$, and different values of $\alpha$ and $\beta$ is shown in Fig. (a).
\begin{rem}
Let $p(u)=a_{1}u+a_{0}$, $q(u)=-ka_{1}u^{2}+b_{1}u+b_{0}$, $k,a_{0},a_{1},b_{1},b_{0}\in\mathbb{R}$ and $k(\neq0)$.
Then, the equation \eqref{doce} can be written as follows
\begin{equation}\label{FE1}
\dfrac{\partial^{\alpha}u}{\partial t^{\alpha}}=a_{1}\left(\dfrac{\partial^{\beta}u}{\partial x^{\beta}}\right)^{2}+(a_{1}u+a_{0})\dfrac{\partial^{\beta}}{\partial x^{\beta}}\left(\dfrac{\partial^{\beta}u}{\partial x^{\beta}}\right)+(2ka_{1}u-b_{1})\dfrac{\partial^{\beta}u}{\partial x^{\beta}}.
\end{equation}
It is easy to find that equation \eqref{FE1} admits a two-dimensional invariant subspace $\mathcal{W}_{2}=\mathfrak{L}\left\{1,E_{\beta}(-kx^{\beta})\right\}$. Thus, we can write the exact solution of equation \eqref{FE1} as follows
\begin{equation}\label{FE2}
u(x,t)=A_{1}(t)+A_{2}(t)E_{\beta}(-kx^{\beta}),
\end{equation}
where the coefficients $A_1(t)$ and $A_{2}(t)$ are satisfy the following system of FODEs
\begin{eqnarray}
\begin{aligned}
\label{FE3}&\dfrac{d^{\alpha}A_{1}}{dt^{\alpha}}=0,\\
&\dfrac{d^{\alpha}A_{2}}{dt^{\alpha}}=(-k^{2}a_{1}A_{1}+a_{0}k^{2}+kb_{1})A_{2}.
\end{aligned}
\end{eqnarray}
First, we consider $\alpha=\beta=1$. Solving the above system \eqref{FE3}, we have
\begin{equation}\label{FE5}
u(x,t)=k_{1}+k_{2}e^{k((-kk_{1}a_{1}+a_{0}k+b_{1})t-x)},\ k,k_{1},k_{2},a_{0},a_{1},b_{1}\in\mathbb{R}.
\end{equation}
 Next, we consider $\alpha,\beta\in(0,1]$. Applying Laplace transformation technique to the above system \eqref{FE3}, we yield an exact solution of equation \eqref{FE1} as follows
\begin{equation}\label{FE6}
u(x,t)=k_{1}+k_{2}E_{\alpha}\left((-k^{2}a_{1}k_{1}+a_{0}k^{2}+kb_{1})t^{\alpha}\right)E_{\beta}(-kx^{\beta}),\ k,k_{1},k_{2},a_{0},a_{1},b_{1}\in\mathbb{R}.
\end{equation}
 Observe that, for $\alpha=\beta=1$, equation \eqref{FE6} is exactly same as \eqref{FE5}. The above exact solution \eqref{FE6} for $k_1=-1$, $a_1=2$, $a_{0}=0$, $k_{2}=k=b_1=1$, $t=2$, and different values of $\alpha$ and $\beta$ is shown in Fig. (b).
\end{rem}

\begin{figure}[h!]
\begin{center}
\begin{subfigure}[]{0.80\textwidth}
  \includegraphics[width=\textwidth]{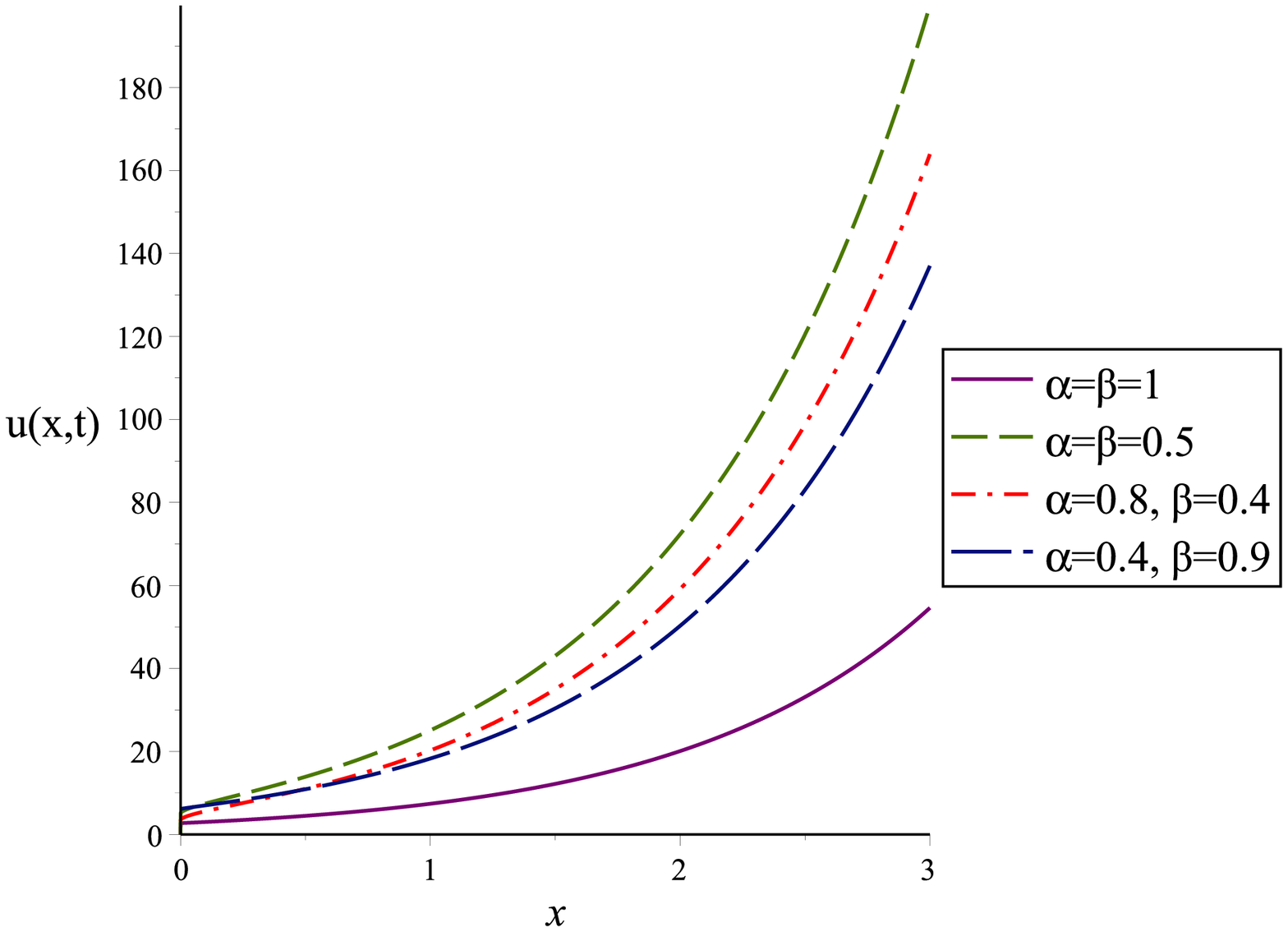}
\footnotesize{Fig.(a) Graphical representation of the solution \eqref{E5} for $a_0=2$, $k=k_0=b_1=1$, $t=1$, and different values of $\alpha$ and $\beta$.}
 \end{subfigure}\hspace{30pt}
 \begin{subfigure}[]{0.80\textwidth}
  \includegraphics[width=\textwidth]{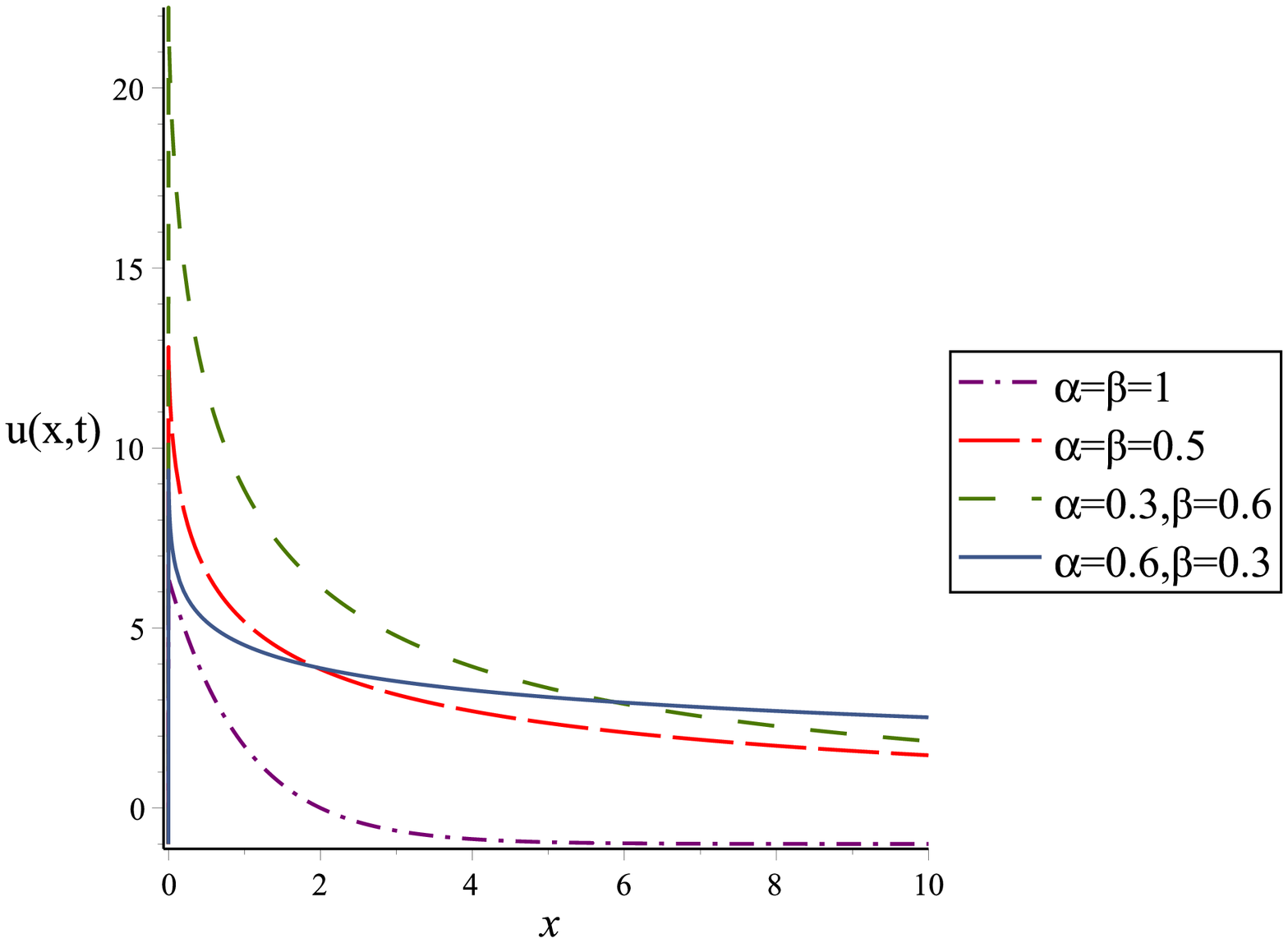}
 \footnotesize{Fig.(b) Graphical representation of the solution \eqref{FE6} for $k_1=-1$, $a_1=2$, $a_{0}=0$, $k_{2}=k=b_1=1$, $t=2$, and different values of $\alpha$ and $\beta$.}
 \end{subfigure}
\end{center}
\end{figure}

\begin{rem}
Let $p(u)=a_{0}$ and $q(u)=b_{1}u+b_{0}$, $a_{0},b_{1},b_{0}\in\mathbb{R}$.\\
Then, equation \eqref{doce} reduces to
\begin{equation}\label{RE1}
\dfrac{\partial^{\alpha}u}{\partial t^{\alpha}}=a_{0}\dfrac{\partial^{\beta}}{\partial x^{\beta}}\left(\dfrac{\partial^{\beta}u}{\partial x^{\beta}}\right)-b_{1}\dfrac{\partial^{\beta}u}{\partial x^{\beta}}
\end{equation}
which admits the following distinct invariant subspaces\\
$(i)\ \mathcal{W}_{2}=\mathfrak{L}\left\{1,x^{\beta}\right\}.$\\
$(ii)\ \mathcal{W}_{n}=\mathfrak{L}\left\{E_{\beta}(k_{1}x^{\beta}),\dots,E_{\beta}(k_{n}x^{\beta})\right\}$, $n\in\mathbb{N}$,\\
$(iii)\ \mathcal{W}_{n+1}=\mathfrak{L}\left\{1,E_{\beta}(k_{1}x^{\beta}),\dots,E_{\beta}(k_{n}x^{\beta})\right\}$, $n\in\mathbb{N},$\\
$(iv)\ \mathcal{W}_{n+2}=\mathfrak{L}\left\{1,x^{\beta},E_{\beta}(k_{1}x^{\beta}),\dots,E_{\beta}(k_{n}x^{\beta})\right\}$, $n\in\mathbb{N}$,\ $k_{i}\in\mathbb{R},\ i=1,\dots,n$.\\
First, we consider the invariant subspace $\mathcal{W}_{n}=\mathfrak{L}\left\{E_{\beta}(k_{1}x^{\beta}),\dots,E_{\beta}(k_{n}x^{\beta})\right\}$, $n\in\mathbb{N}$. Thus, we can write the exact solution in the form
\begin{equation}\label{RE2}
u(x,t)=A_{1}(t)E_{\beta}(k_{1}x^{\beta})+\dots+A_{n}(t)E_{\beta}(k_{n}x^{\beta}),
\end{equation}
where $A_{i}(t)$, $i=1,\dots,n$, are satisfy the following system of $n$-FODEs
\begin{eqnarray}
\begin{aligned}\label{ss}
&\dfrac{d^{\alpha}A_{1}}{dt^{\alpha}}=\left(a_{0}k_{1}^{2}-b_{1}k_{1}\right)A_{1},\\
&\vdots\\
&\dfrac{d^{\alpha}A_{n}}{dt^{\alpha}}=\left(a_{0}k_{n}^{2}-b_{1}k_{n}\right)A_{n}.
\end{aligned}
\end{eqnarray}
Applying Laplace transformation technique to the above system \eqref{ss}, we obtain an exact solution of equation \eqref{RE1} with $\alpha=\beta=1$, reads
\begin{equation}\label{RE3}
u(x,t)=\sum\limits^{n}_{s=1}r_{s}e^{((a_{0}k_{s}-b_{1})k_{s}t+k_{s}x)},
\end{equation}
while $\alpha\in(0,1]$, it takes
\begin{equation}\label{RE4}
u(x,t)=\sum\limits^{n}_{s=1}r_{s}E_{\alpha}\left((a_{0}k_{s}-b_{1})k_{s}t^{\alpha}\right)E_{\beta}(k_{s}x^{\beta}),\ r_{s}, k_{s}, a_{0}, b_{1}\in\mathbb{R}\ (s=1,2,\dots,n).
\end{equation}
We observe that for $\alpha=\beta=1$, equation \eqref{RE4} is exactly same as \eqref{RE3}.\\
Proceeding the above similar procedure, we can derive another more general exact solution associated with the more general invariant subspace $\mathcal{W}_{n+2}=\mathfrak{L}\left\{1,x^{\beta},E_{\beta}(k_{1}x^{\beta}),\dots,E_{\beta}(k_{n}x^{\beta})\right\}$, $n\in\mathbb{N}$. For this case, we obtain the more general exact solution of \eqref{RE1} reads
\begin{equation}\label{REE4}
u(x,t)=c_{1}-c_{2}b_{1}\dfrac{\Gamma(\beta+1)}{\Gamma(\alpha+1)}t^{\alpha}+c_{2}x^{\beta}+\sum\limits^{n}_{s=1}r_{s}E_{\alpha}\left((a_{0}k_{s}-b_{1})k_{s}t^{\alpha}\right)E_{\beta}(k_{s}x^{\beta}),\ \alpha,\beta\in(0,1],
\end{equation}
where $c_{1}$, $c_{2}$, $r_{s}$, $k_{s}$, $(s=1,2,\dots,n)$, $a_{0}$ and $b_{1}$ are arbitrary constants. Observe that, for $c_{1}=c_{2}=0$, equation \eqref{REE4} is exactly the same as \eqref{RE4}. Similarly, we can derive different types of exact solutions for time-space fractional diffusion-convection equation \eqref{RE1} using the other above mentioned invariant subspaces.
\end{rem}
\begin{rem}
Let $p(u)=a_{1}u$ and $q(u)=\dfrac{b_{2}}{2}u^{2}$, $b_{2}$ and $a_{1}$ are constants.\\
Then, the equation \eqref{doce} reduces into
\begin{equation}\label{RE5}
\dfrac{\partial^{\alpha}u}{\partial t^{\alpha}}=a_{1}\left(\dfrac{\partial^{\beta}u}{\partial x^{\beta}}\right)^{2}+a_{1}u \dfrac{\partial^{\beta}}{\partial x^{\beta}}\left(\dfrac{\partial^{\beta}u}{\partial x^{\beta}}\right)-b_{2}u\dfrac{\partial^{\beta}u}{\partial x^{\beta}}.
\end{equation}
It is easy to find that equation \eqref{RE5} admits a two-dimensional invariant subspace $\mathcal{W}_{2}=\mathfrak{L}\left\{1,E_{\beta}(kx^{\beta})\right\}$ if $a_{1}=\dfrac{b_{2}}{2k}.$
Hence, we obtain an exact solution of \eqref{RE5} with $\alpha=\beta=1$, reads
\begin{equation}\label{RE7}
u(x,t)=k_{0}+k_{1}e^{k(-\frac{b_{2}}{2}k_{0}t+x)},
\end{equation}
while $\alpha\in(0,1]$, it takes
\begin{equation}\label{RE8}
u(x,t)=k_{0}+k_{1}E_{\alpha}\left(-\frac{b_{2}}{2}kk_{0}t^{\alpha}\right)E_{\beta}(kx^{\beta}),\ k_{0},\ k_{1},\ b_{2},\ k\in\mathbb{R}.
\end{equation}
The above exact solution \eqref{RE8} for $k=k_0=k_1=b_2=1$, $t=2$, and different values of $\alpha$ and $\beta$ is shown in Fig. (c).
Note that, for $\alpha=\beta=1$, equation \eqref{RE8} is exactly same as \eqref{RE7}. We would like to point out that when $\beta=1$, the above solution \eqref{RE8} is exactly same as given in \cite{pra4}.
\end{rem}
\begin{rem}
Let $p(u)=u$ and $q(u)=b_{0}$, $b_0\in\mathbb{R}$.\\
Then, equation \eqref{doce} describes only the time-space fractional diffusion equation as follows
\begin{eqnarray}\label{RE9}
\dfrac{\partial^{\alpha}u}{\partial t^{\alpha}}=\left(\dfrac{\partial^{\beta}u}{\partial x^{\beta}}\right)^{2}+u \dfrac{\partial^{\beta}}{\partial x^{\beta}}\left(\dfrac{\partial^{\beta}u}{\partial x^{\beta}}\right)
\end{eqnarray}
which admits invariant subspace $\mathcal{W}_{2}=\mathfrak{L}\left\{1,x^{\beta}\right\}$. In this case, we have
\begin{equation}\label{RPP}
u(x,t)=k_{0}+k_{1}^{2}\dfrac{(\Gamma(\beta+1))^{2}}{\Gamma(\alpha+1)}t^{\alpha}+k_{1}x^{\beta},\ k_0,k_1\in\mathbb{R},\ \alpha,\beta\in(0,1].
\end{equation}
The above exact solution \eqref{RPP} for $k_0=k_1=1$, $t=2$, and different values of $\alpha$ and $\beta$ is shown in Fig. (d). We would like to mention that when $\beta=1$, the above solution \eqref{RPP} is exactly the same as given in \cite{pra4}.
\end{rem}
\begin{figure}[h!]
\begin{center}
\begin{subfigure}[]{0.80\textwidth}
  \includegraphics[width=\textwidth]{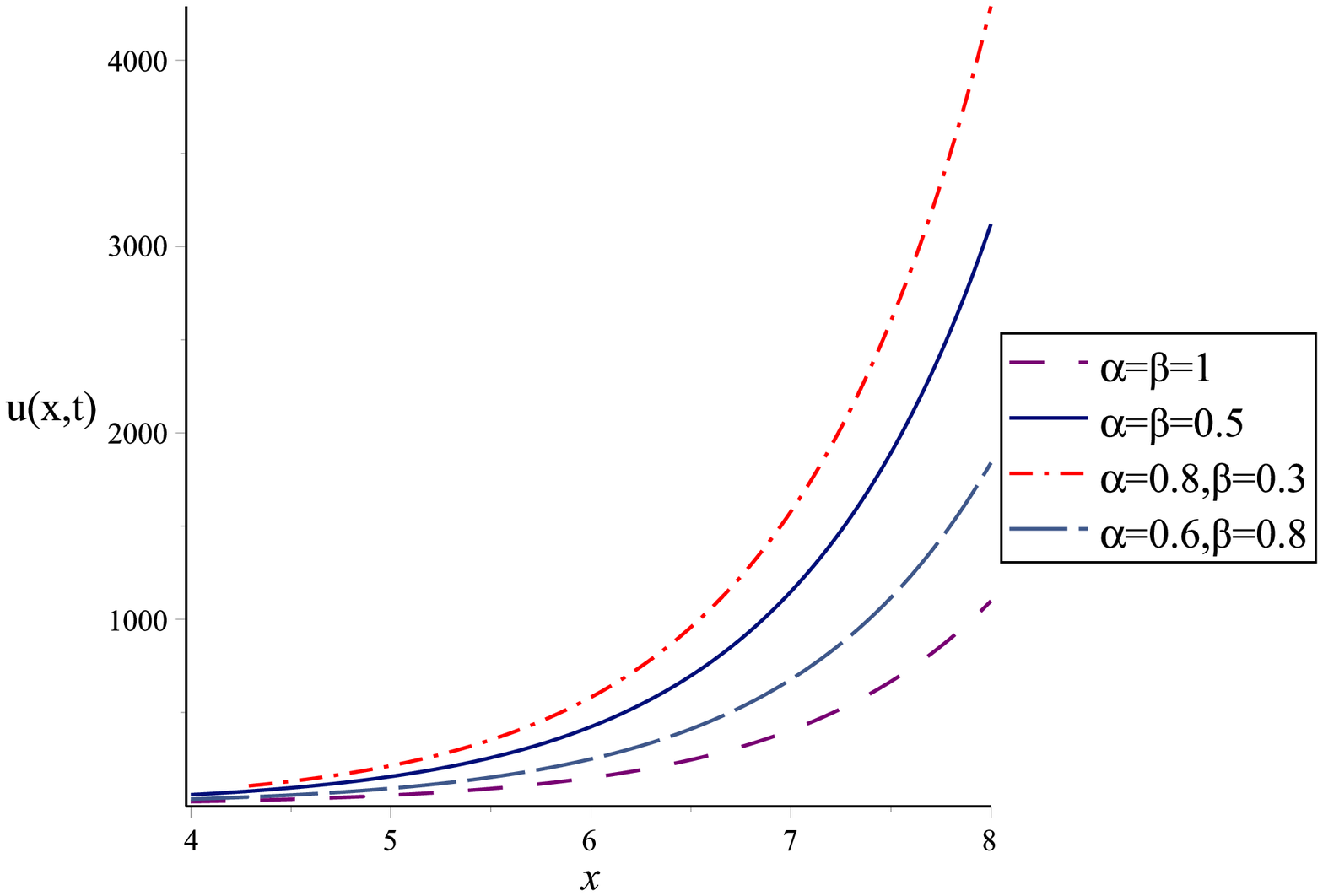}
\footnotesize{Fig.(c) Graphical representation of the solution \eqref{RE8} for $k=k_0=k_1=b_2=1$, $t=2$, and different values of $\alpha$ and $\beta$.}
 \end{subfigure}\hspace{10pt}
 \begin{subfigure}[]{0.80\textwidth}
  \includegraphics[width=\textwidth]{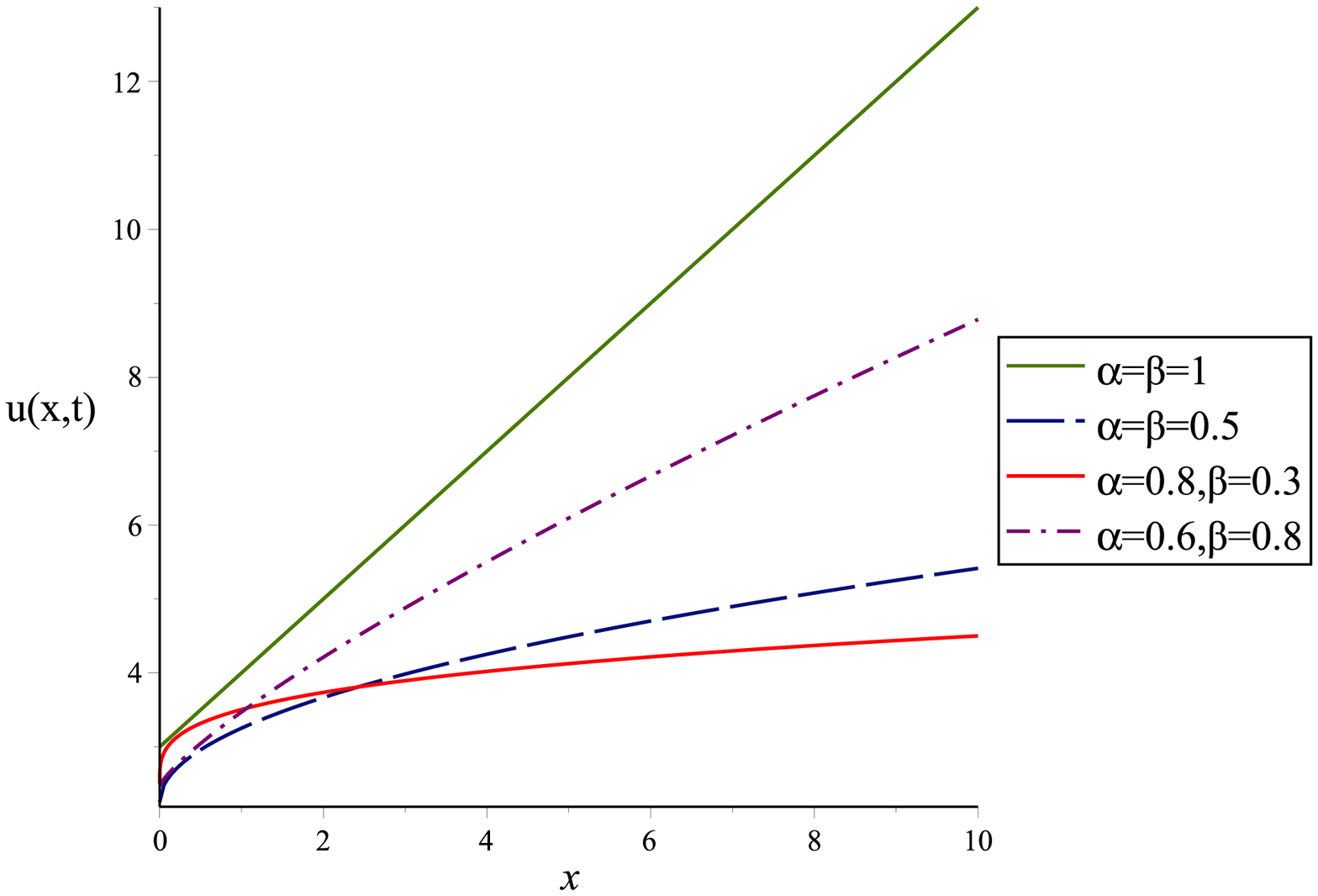}
 \footnotesize{Fig.(d) Graphical representation of the solution \eqref{RPP} for $k_0=k_1=1$, $t=2$, and different values of $\alpha$ and $\beta$.}
 \end{subfigure}
\end{center}
\end{figure}

\begin{rem}
We note that if $p(u)=1$ and $q(u)=-\dfrac{1}{2}u^{2}$, then the diffusion-convection equation \eqref{doce} reduces into time-space fractional Burgers equation
\begin{equation}
\dfrac{\partial^{\alpha}u}{\partial t^{\alpha}}=\dfrac{\partial^{\beta}}{\partial x^{\beta}}\left(\dfrac{\partial^{\beta}u}{\partial x^{\beta}}\right)+u \dfrac{\partial^{\beta}u}{\partial x^{\beta}},
\end{equation}
which admits a two-dimensional polynomial invariant subspace $\mathcal{W}_{2}=\mathfrak{L}\left\{1,x^{\beta}\right\}$.
\end{rem}
\subsection{Time-space fractional reaction-diffusion equation}
Consider the following time-space fractional reaction-diffusion equation
\begin{equation}\label{eqsr1}
\dfrac{\partial^{\alpha}u}{\partial t^{\alpha}}=\hat{G}[u]=p(u)\dfrac{\partial^{\beta+1}u}{\partial x^{\beta+1}}+q(u),\ \alpha,\beta\in(0,1],
\end{equation}
The above PDE with $\beta=1$ was discussed through the generalized differential transform method in \cite{st2}. We would like to point out that the differential operator $\hat{G}[u]$ admits no invariant subspace for arbitrary functions $p(u)$ and $q(u)$. Hence, we choose $p(u)=a_{n}u^{n}+a_{n-1}u^{n-1}+\dots+a_{1}u+a_{0}$,\ $q(u)=b_{n+1}u^{n+1}+b_{n}u^{n}+\dots+b_{1}u+b_0$, $n\in\mathbb{N}$ where $k$, $a_{n},b_{n+1},\dots,a_{1},b_{1},b_{0},a_{0}$ are non-zero arbitrary constants. Then, the time-space fractional reaction-diffusion equation \eqref{eqsr1} reduces to
\begin{eqnarray}\label{eqsr2}
\begin{aligned}
\dfrac{\partial^{\alpha}u}{\partial t^{\alpha}}=\hat{G}[u]=&\left[a_{n}u^{n}+a_{n-1}u^{n-1}+\dots+a_{1}u+a_{0}\right]\dfrac{\partial^{\beta+1}u}{\partial x^{\beta+1}}\\
&+\left(b_{n+1}u^{n+1}+b_{n}u^{n}+\dots+b_{1}u+b_0\right).
\end{aligned}
\end{eqnarray}
It is directly to check that the above equation \eqref{eqsr2} admits a one-dimensional invariant subspace $\mathcal{W}_{1}=\mathfrak{L}\left\{E_{\beta+1}(-kx^{\beta+1})\right\}$, because
$$\hat{G}[A_{1} E_{\beta+1}(-kx^{\beta+1})]=(-ka_0+b_1)A_{1} E_{\beta+1}(-kx^{\beta+1})\in \mathcal{W}_{1}$$ if $a_{i}k=b_{i+1}$, $i=1,2\dots n$, $n\in\mathbb{N}$ and $b_0=0$.\\
Following the above similar procedure, first, we consider $\alpha=\beta=1$. In this case, we have
\begin{equation}\label{eqsr5}
u(x,t)=k_{0}e^{(-ka_{0}+b_{1})t-x^{2}},
\end{equation}
where $k,k_{0}$, $b_{1}$ and $a_{0}$ are arbitrary constants.\\
Next, we assume $\alpha,\beta\in(0,1]$.
Thus, we obtain an exact solution of equation \eqref{eqsr2} as follows
\begin{equation}\label{eqsr6}
u(x,t)=k_{0}E_{\alpha}((-ka_{0}+b_{1})t^{\alpha})E_{\beta+1}(-kx^{\beta+1}),\ \alpha,\beta\in(0,1],
\end{equation}
where $k_{0}$, $k$, $b_{1}$ and $a_{0}$ are non-zero arbitrary constants. The above exact solution \eqref{eqsr6} for $k=b_1=1$, $a_{0}=-1$, $k_0=2$, $t=2$ and different values of $\alpha$ and $\beta$ is shown in Fig. (e). We observe that for $\alpha=\beta=1$, equation \eqref{eqsr6} is exactly same as \eqref{eqsr5}.
\begin{rem}
Let $a_{1}, b_{2}, b_1\in\mathbb{R}$ and $a_{n}=a_{n-1}=\dots=a_{0}=b_{n+1}=b_{n}=\dots=b_{1}=b_0=0$, that is, $p(u)=a_1 u$ and $q(u)=b_2 u^{2}+b_1 u$.\\
Then, equation \eqref{eqsr1} reduces into
\begin{eqnarray}\label{eqsr7}
\begin{aligned}
\dfrac{\partial^{\alpha}u}{\partial t^{\alpha}}=a_1 u\dfrac{\partial^{\beta+1}u}{\partial x^{\beta+1}}+b_2u^{2}+b_1u
\end{aligned}
\end{eqnarray}
which admits one-dimensional invariant subspace $\mathcal{W}_{2}=\mathfrak{L}\left\{E_{\beta+1}(-kx^{\beta+1})\right\}$ if $b_2=a_1k$. In this case, we obtain an exact solution
\begin{equation}\label{RPPP1}
  u(x,t)=k_{1} E_{\alpha}(b_{1}t^{\alpha})E_{\beta+1}(-kx^{\beta+1}),\ k,k_1,b_1\in\mathbb{R},\ \alpha,\beta\in(0,1],
\end{equation}
The above exact solution \eqref{RPPP1} for $k_1=1$, $b_1=-4$, $k=2$, $t=2$, and different values of $\alpha$ and $\beta$ is shown in Fig. (f).
\end{rem}
\begin{figure}[h!]
\begin{center}
\begin{subfigure}[]{.94\textwidth}
  \includegraphics[width=\textwidth]{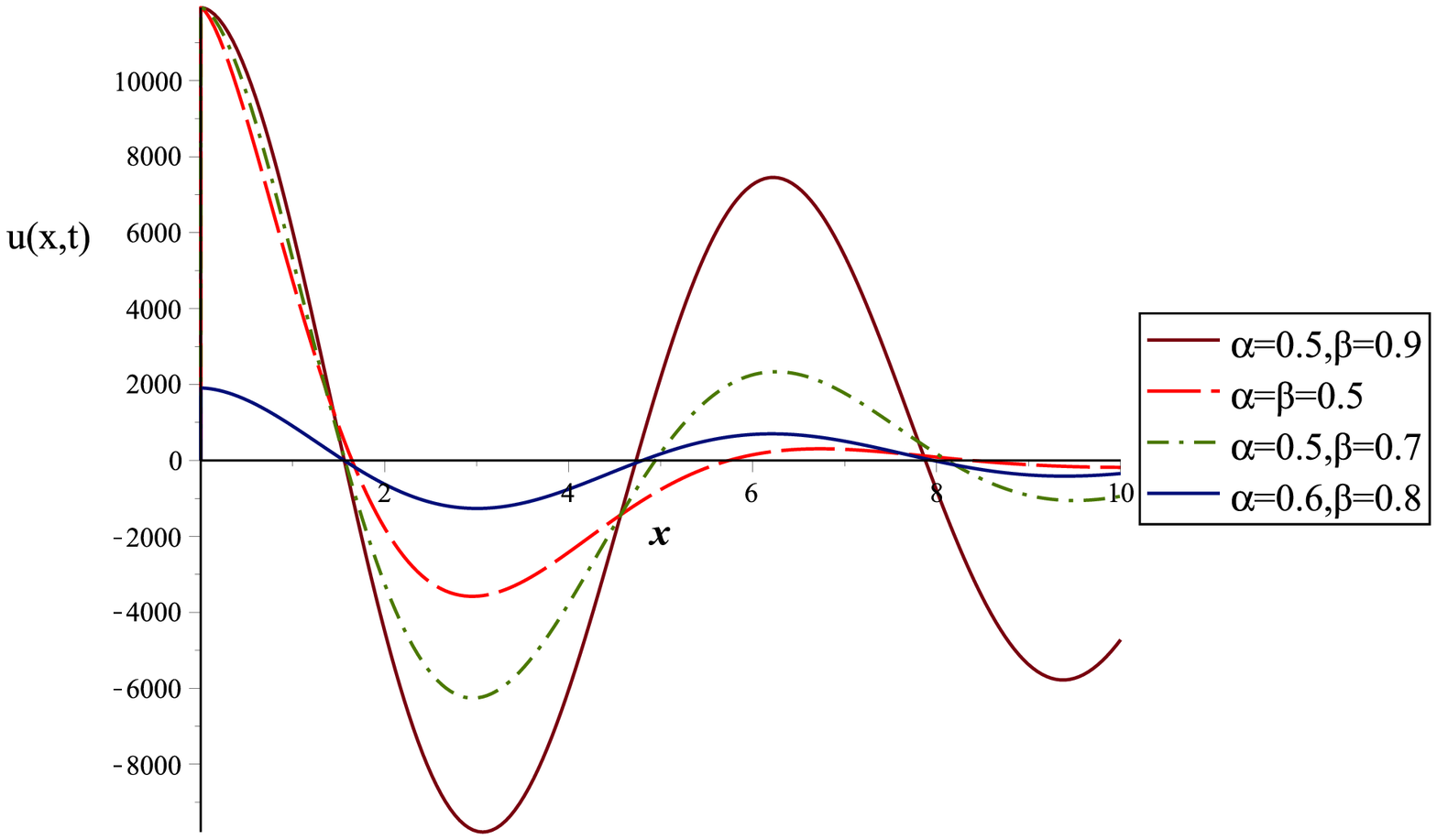}
\footnotesize{Fig.(e) Graphical representation of the solution \eqref{eqsr6} for $k=b_1=1$, $k_0=2$, $a_0=-1$, $t=2$ and different values of $\alpha$ and $\beta$.}
 \end{subfigure}\hspace{30pt}
 \begin{subfigure}[]{.94\textwidth}
  \includegraphics[width=\textwidth]{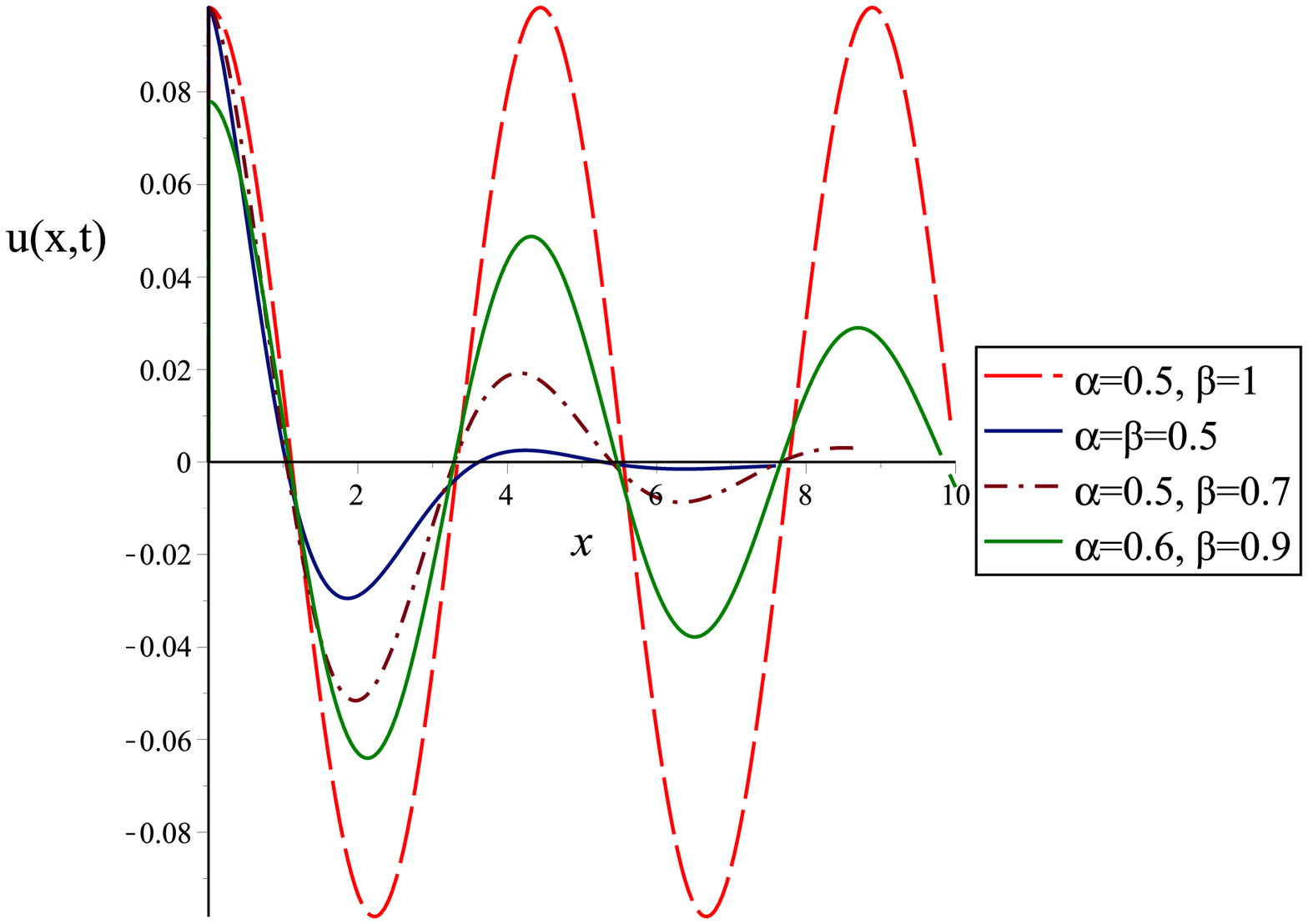}

 \footnotesize{Fig.(f) Graphical representation of the solution \eqref{RPPP1} for $k_1=1$, $b_1=-4$, $k=2$, $t=2$, and different values of $\alpha$ and $\beta$.}
 \end{subfigure}
\end{center}
\end{figure}
\begin{rem}
Let $a_{1}=1$, $b_{1}=-k\neq0$, $b_0, k\in\mathbb{R}$ and $a_{n}=a_{n-1}=\dots=a_{0}=b_{n+1}=b_{n}=\dots=b_{2}=0$, that is, $p(u)=u$ and $q(u)=-ku+b_0$.\\
Then, equation \eqref{eqsr2} reduces into
\begin{eqnarray}\label{eqsr8}
\begin{aligned}
\dfrac{\partial^{\alpha}u}{\partial t^{\alpha}}=u\dfrac{\partial^{\beta+1}u}{\partial x^{\beta+1}}-ku+b_0
\end{aligned}
\end{eqnarray}
which admits the following distinct invariant subspaces
\begin{enumerate}
\item[(i)]\ $\mathcal{W}_{2}=\mathfrak{L}\left\{1,x^{\beta}\right\}$,
\item[(ii)]\ $\mathcal{W}_{2}=\mathfrak{L}\left\{1,x^{\beta+1}\right\}$,
 \item[(iii)]\ $\mathcal{W}_{3}=\mathfrak{L}\left\{1,x^{\beta},x^{\beta+1}\right\}$.
 \end{enumerate}
  Now, we consider the invariant subspace $\mathcal{W}_{2}=\mathfrak{L}\left\{1,x^{\beta}\right\}$. In this case, we yield an exact solution of equation \eqref{eqsr8} as follows
 \begin{equation}\label{RPPP2}
 u(x,t)=(k_{1}+k_{2}x^{\beta})E_{\alpha}(-kt^{\alpha})+b_0t^{\alpha} E_{\alpha,\alpha+1}(-kt^\alpha),\ k,k_1,k_2,b_0\in\mathbb{R},\ \alpha,\beta\in(0,1].
 \end{equation}
 The above exact solution \eqref{RPPP2} for $k_1=k_2=1$, $b_0=0$, $k=2$, $x=2$, and different values of $\alpha$ and $\beta$ is shown in Fig. (g).
\end{rem}
\begin{rem}
Let $a_{0}=c\in\mathbb{R}$, $b_{1}=-k$ and $a_{n}=a_{n-1}=\dots=a_{1}=b_{n+1}=b_{n}=\dots=b_{2}=b_0=0$, that is, $p(u)=c$ and $q(u)=-ku$.\\
Then, equation \eqref{eqsr2} reduces into
\begin{eqnarray}\label{eqsr9}
\begin{aligned}
\dfrac{\partial^{\alpha}u}{\partial t^{\alpha}}=c\dfrac{\partial^{\beta+1}u}{\partial x^{\beta+1}}-ku
\end{aligned}
\end{eqnarray}
which admits distinct invariant subspaces, that is,
\begin{enumerate}
\item[(i)]\ $\mathcal{W}_{2}=\mathfrak{L}\left\{1,x^{\beta}\right\},$
  \item[(ii)]\ $\mathcal{W}_{3}=\mathfrak{L}\left\{1,x^{\beta},x^{\beta+1}\right\},$
\item[(iii)]\ $\mathcal{W}_{n}=\mathfrak{L}\left\{E_{\beta+1}(k_{1}x^{\beta+1}),\dots,E_{\beta+1}(k_{n}x^{\beta+1})\right\}$, $n\in\mathbb{N}$,
\item[(iv)]\ $\mathcal{W}_{n+1}=\mathfrak{L}\left\{1,E_{\beta+1}(k_{1}x^{\beta+1}),\dots,E_{\beta+1}(k_{n}x^{\beta+1})\right\}$, $n\in\mathbb{N},$
\item[(v)]\ $\mathcal{W}_{n+2}=\mathfrak{L}\left\{1,x^{\beta+1},E_{\beta+}(k_{1}x^{\beta+1}),\dots,E_{\beta+1}(k_{n}x^{\beta+1})\right\}$, $n\in\mathbb{N}$,
\item[(vi)]\ $\mathcal{W}_{n+3}=\mathfrak{L}\left\{1,x^{\beta},x^{\beta+1},E_{\beta+1}(k_{1}x^{\beta+1}),\dots,E_{\beta+1}(k_{n}x^{\beta+1})\right\}$, $n\in\mathbb{N}$,
\end{enumerate}
 where $k_{i}\in\mathbb{R}$, $i=1,2,\dots,n$. Now, we consider the more general invariant subspace $\mathcal{W}_{n+3}=\mathfrak{L}\left\{1,x^{\beta},x^{\beta+1},E_{\beta+1}(k_{1}x^{\beta+1}),\dots,E_{\beta+1}(k_{n}x^{\beta+1})\right\}$, which suggests that equation \eqref{eqsr9} possesses the more general exact solution
\begin{eqnarray}
\begin{aligned}\label{sr7}
  u(x,t) =& \left(\lambda_{1}+\lambda_{2}x^{\beta}+\lambda_{3}x^{\beta+1}\right)E_{\alpha}(-kt^{\alpha})+\sum\limits^{n}_{r=1}\lambda_{r+3}E_{\alpha}((k_{r}c-k)t^{\alpha})E_{\beta+1}(k_{r}x^{\beta+1})\\
   +&c \lambda_{3}\Gamma(\beta+2)\int\limits^{t}_{0}E_{\alpha}(-k(t-\tau)^{\alpha}) (\tau)^{\alpha-1}E_{\alpha,\alpha}(-k\tau^{\alpha})d\tau,
  \end{aligned}
\end{eqnarray}
where $k$, $c$, $k_{i}$ $(i=1,2,\dots,n)$ and $\lambda_{s}$ (s=1,2,\dots,n+3), are arbitrary constants.
\end{rem}
\begin{rem}
Let $a_{0}=c$, $c\in\mathbb{R}$, and $a_{n}=a_{n-1}=\dots=a_{1}=b_{n+1}=b_{n}=\dots=b_{1}=b_0=0$, that is, $p(u)=c$ and $q(u)=0$.\\
Then, fractional reaction-diffusion equation \eqref{eqsr2} reduces into the fractional linear sub-diffusion equation
\begin{eqnarray}\label{eqsr10}
\begin{aligned}
\dfrac{\partial^{\alpha}u}{\partial t^{\alpha}}=c\dfrac{\partial^{\beta+1}u}{\partial x^{\beta+1}}
\end{aligned}
\end{eqnarray}
which admits the following distinct invariant subspaces:
\begin{enumerate}
\item[(i)]\ $\mathcal{W}_{2}=\mathfrak{L}\left\{1,x^{\beta}\right\},$
  \item[(ii)]\ $\mathcal{W}_{3}=\mathfrak{L}\left\{1,x^{\beta},x^{\beta+1}\right\},$
\item[(iii)]\ $\mathcal{W}_{n}=\mathfrak{L}\left\{E_{\beta+1}(k_{1}x^{\beta+1}),\dots,E_{\beta+1}(k_{n}x^{\beta+1})\right\}$, $n\in\mathbb{N}$,
\item[(iv)]\ $\mathcal{W}_{n+1}=\mathfrak{L}\left\{1,E_{\beta+1}(k_{1}x^{\beta+1}),\dots,E_{\beta+1}(k_{n}x^{\beta+1})\right\}$, $n\in\mathbb{N},$
\item[(v)]\ $\mathcal{W}_{n+2}=\mathfrak{L}\left\{1,x^{\beta+1},E_{\beta+}(k_{1}x^{\beta+1}),\dots,E_{\beta+1}(k_{n}x^{\beta+1})\right\}$, $n\in\mathbb{N}$,
\item[(vi)]\ $\mathcal{W}_{n+3}=\mathfrak{L}\left\{1,x^{\beta},x^{\beta+1},E_{\beta+1}(k_{1}x^{\beta+1}),\dots,E_{\beta+1}(k_{n}x^{\beta+1})\right\}$, $n\in\mathbb{N}$,
\end{enumerate}
where $k_{i}\in\mathbb{R}$, $i=1,2,\dots,n$. Here, we consider the more general invariant subspace (vi), which possesses the more general exact solution of fractional sub-diffusion equation \eqref{eqsr10} reads
\begin{eqnarray}
\begin{aligned}\label{sr8}
  u(x,t) =& \left(\lambda_{1}+\lambda_{2}x^{\beta}+\lambda_{3}x^{\beta+1}\right)+\sum\limits^{n}_{r=1}\lambda_{r+3}E_{\alpha}(k_{r}ct^{\alpha})E_{\beta+1}(k_{r}x^{\beta+1})\\
   +&c \lambda_{3}\dfrac{\Gamma(\beta+2)}{\Gamma(\alpha+1)}t^{\alpha},\ \alpha,\beta\in(0,1],
  \end{aligned}
\end{eqnarray}
where $c$, $k_{i}$ $(i=1,2,\dots,n)$ and $\lambda_{s}$ (s=1,2,\dots,n+3), are arbitrary constants. The above exact solution \eqref{sr8} for $\lambda_1=\lambda_2=\lambda_3=\lambda_4=k_1=c=n=1$, $t=2$, and different values of $\alpha$ and $\beta$ is shown in Fig. (h). Observe that for $k=0$, equation \eqref{sr7} is exactly same as equation \eqref{sr8}.

\begin{figure}[h!]
\begin{center}
\begin{subfigure}[]{0.88\textwidth}
  \includegraphics[width=\textwidth]{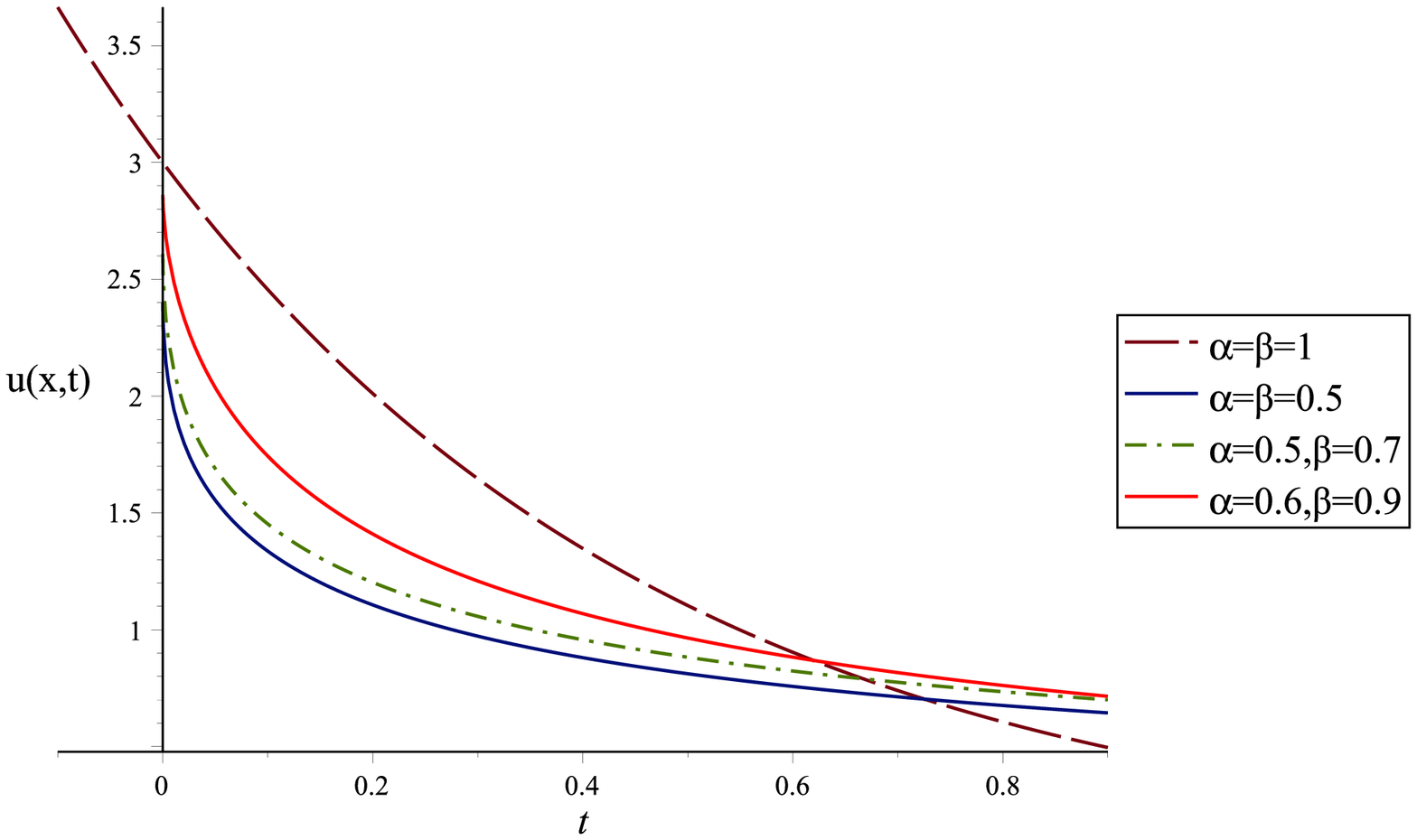}
\footnotesize{Fig.(g) Graphical representation of the solution \eqref{RPPP2} for $k_1=k_2=1$, $b_0=0$, $k=2$, $x=2$, and different values of $\alpha$ and $\beta$.}
 \end{subfigure}\hspace{30pt}
 \begin{subfigure}[]{0.88\textwidth}
  \includegraphics[width=\textwidth]{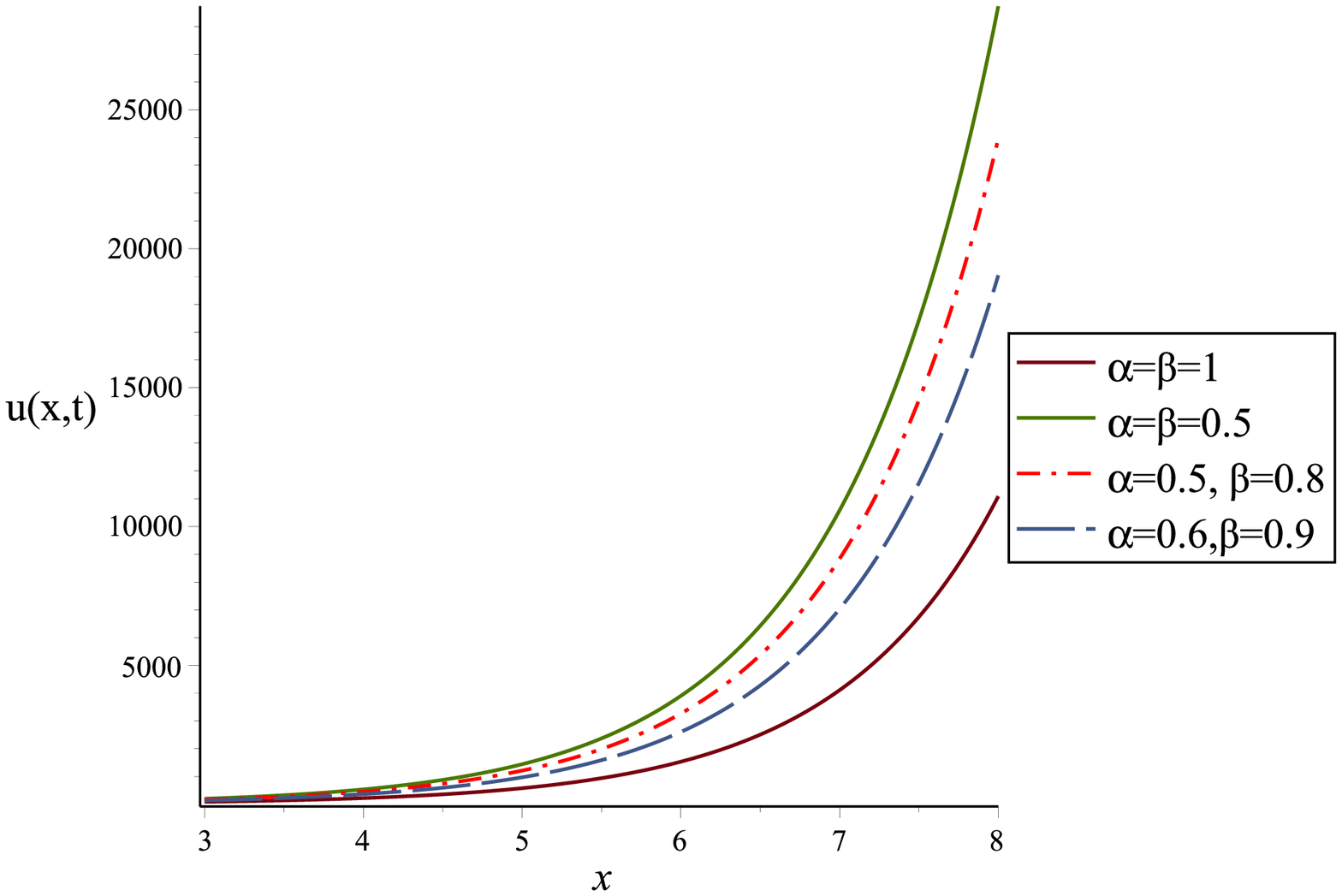}
 \footnotesize{Fig.(h) Graphical representation of the solution \eqref{sr8} for $\lambda_1=\lambda_2=\lambda_3=\lambda_4=k_1=c=n=1$, $t=2$, and different values of $\alpha$ and $\beta$.}
 \end{subfigure}
\end{center}
\end{figure}

\end{rem}
\subsection{Time-space fractional diffusion equation with source term}
Consider a time-space fractional nonlinear diffusion equation with source term
\begin{equation}\label{DS1}
\dfrac{\partial^{\alpha}u}{\partial t^{\alpha}}=\hat{G}[u]=\left(\dfrac{\partial p}{\partial u}\right)\left(\dfrac{\partial^{\beta}u}{\partial x^{\beta}}\right)^{2}+p(u)\dfrac{\partial^{\beta}}{\partial x^{\beta}}\left(\dfrac{\partial^{\beta}u}{\partial x^{\beta}}\right)+q(u),\ \alpha,\beta\in(0,1].
\end{equation}
We would like to point out that the operator $\hat{G}[u]$ admits no invariant subspace for arbitrary functions $p(u)$ and q(u). Hence, we choose $p(u)=a_{n}u^{n}+a_{n-1}u^{n-1}+\dots+a_{1}u+a_{0}$ and $q(u)=b_{n+1}u^{n+1}+b_{n}u^{n}+\dots+b_{1}u$, where $a_{n},b_{n+1},\dots,a_{1},b_{1},a_{0}$ are arbitrary constants. Then, the equation \eqref{DS1} describes the time-space diffusion equation with source term
\begin{eqnarray}\label{DS2}
\begin{aligned}
\dfrac{\partial^{\alpha}u}{\partial t^{\alpha}}=&\left(na_{n}u^{n-1}+(n-1)a_{n-1}u^{n-2}+\dots+a_{1}\right)\left(\dfrac{\partial^{\beta}u}{\partial x^{\beta}}\right)^{2}\\
&+\left(a_{n}u^{n}+a_{n-1}u^{n-1}+\dots+a_{1}u+a_{0}\right)\dfrac{\partial^{\beta}}{\partial x^{\beta}}\left(\dfrac{\partial^{\beta}u}{\partial x^{\beta}}\right)\\
&+b_{n+1}u^{n+1}+b_{n}u^{n}+\dots+b_{1}u.
\end{aligned}
\end{eqnarray}
It is easy check that the above equation \eqref{DS2} admits a one-dimensional invariant subspace $\mathcal{W}_{1}=\mathfrak{L}\left\{E_{\beta}(kx^{\beta})\right\}$ if $(i+1)a_{i}k^{2}=-b_{i+1}$, $i=1,2\dots,n$, $n\in\mathbb{N}$.
Thus, for $\alpha=\beta=1$, we have
\begin{equation}\label{DS4}
u(x,t)=k_{0}e^{(a_{0}k^{2}+b_{1})t+kx},\ b_{1},k_{0},k,a_{0}\in\mathbb{R}.
\end{equation}
Now, we consider $\alpha,\beta\in(0,1]$. In this case, we obtain an exact solution of equation \eqref{DS2} reads
\begin{equation}\label{DS6}
u(x,t)=k_{0}E_{\alpha}((a_{0}k^{2}+b_{1})t^{\alpha})E_{\beta}(kx^{\beta}),\ b_{1},k_{0},k,a_{0}\in\mathbb{R}.
\end{equation}
Note that if $\alpha=\beta=1$,  then equation \eqref{DS6} is exactly same as equation \eqref{DS4}. The above exact solution \eqref{DS6} for $k_0=b_1=1$, $a_{0}=0$, $k=2$, $t=2$, and different values of $\alpha$ and $\beta$ is shown in Fig. (i).
\begin{rem}
Let $p(u)=a_{0}$ and $q(u)=b_{1}u+b_{0}$, $a_{0}$, $b_{0}$ and $b_{1}$ are arbitrary constants.
Then, equation \eqref{DS1} reduces into
\begin{equation}\label{DS7}
\dfrac{\partial^{\alpha}u}{\partial t^{\alpha}}=a_{0}\dfrac{\partial^{\beta}}{\partial x^{\beta}}\left(\dfrac{\partial^{\beta}u}{\partial x^{\beta}}\right)+b_{1}u+b_{0}
\end{equation}
which admits distinct invariant subspaces, that is,
\begin{enumerate}
\item[(i)]\ $\mathcal{W}_{2}=\mathfrak{L}\left\{1,x^{\beta}\right\}$,
\item[(ii)]\ $\mathcal{W}_{n+1}=\mathfrak{L}\left\{1,E_{\beta}(k_{1}x^{\beta}),E_{\beta}(k_{2}x^{\beta}),\dots,E_{\beta}(k_{n}x^{\beta})\right\}$,
\item[(iii)]\ $\mathcal{W}_{n+2}=\mathfrak{L}\left\{1,x^{\beta},E_{\beta}(k_{1}x^{\beta}),E_{\beta}(k_{2}x^{\beta}),\dots,E_{\beta}(k_{n}x^{\beta})\right\},\ k_{i}\in\mathbb{R},\ i=1,2,\dots,n.$
\end{enumerate}
Let us consider the more general invariant subspace (iii), that is,\\
$\mathcal{W}_{n+2}=\mathfrak{L}\left\{1,x^{\beta},E_{\beta}(k_{1}x^{\beta}),E_{\beta}(k_{2}x^{\beta}),\dots,E_{\beta}(k_{n}x^{\beta})\right\}.$
Thus, we obtain an exact solution of \eqref{DS7} reads
\begin{equation}\label{DS10}
u(x,t)=(c_{1}+c_{2}x^{\beta})E_{\alpha}(b_{1}t^{\alpha})+b_{0}t^{\alpha}E_{\alpha,\alpha+1}(b_{1}t^{\alpha})+\sum\limits^{n}_{r=1}c_{r+2}E_{\alpha}((b_{1}+a_{0}k_{r}^{2})t^{\alpha})E_{\beta}(k_{r}x^{\beta}),
\end{equation}
where $a_{0}$, $b_{0}$, $b_{1}$, $k_{i} (i=1,2,\dots,n)$ and $c_{s}$ $(s=1,2,\dots,n+2)$ are arbitrary constants. The above exact solution \eqref{DS10} for  $c_1=c_2=c_3=b_0=b_1=a_0=k_1=n=1$, $x=2$, and different values of $\alpha$ and $\beta$ is shown in Fig. (j).
\end{rem}
\begin{figure}[h!]
\begin{center}
\begin{subfigure}[]{0.88\textwidth}
  \includegraphics[width=\textwidth]{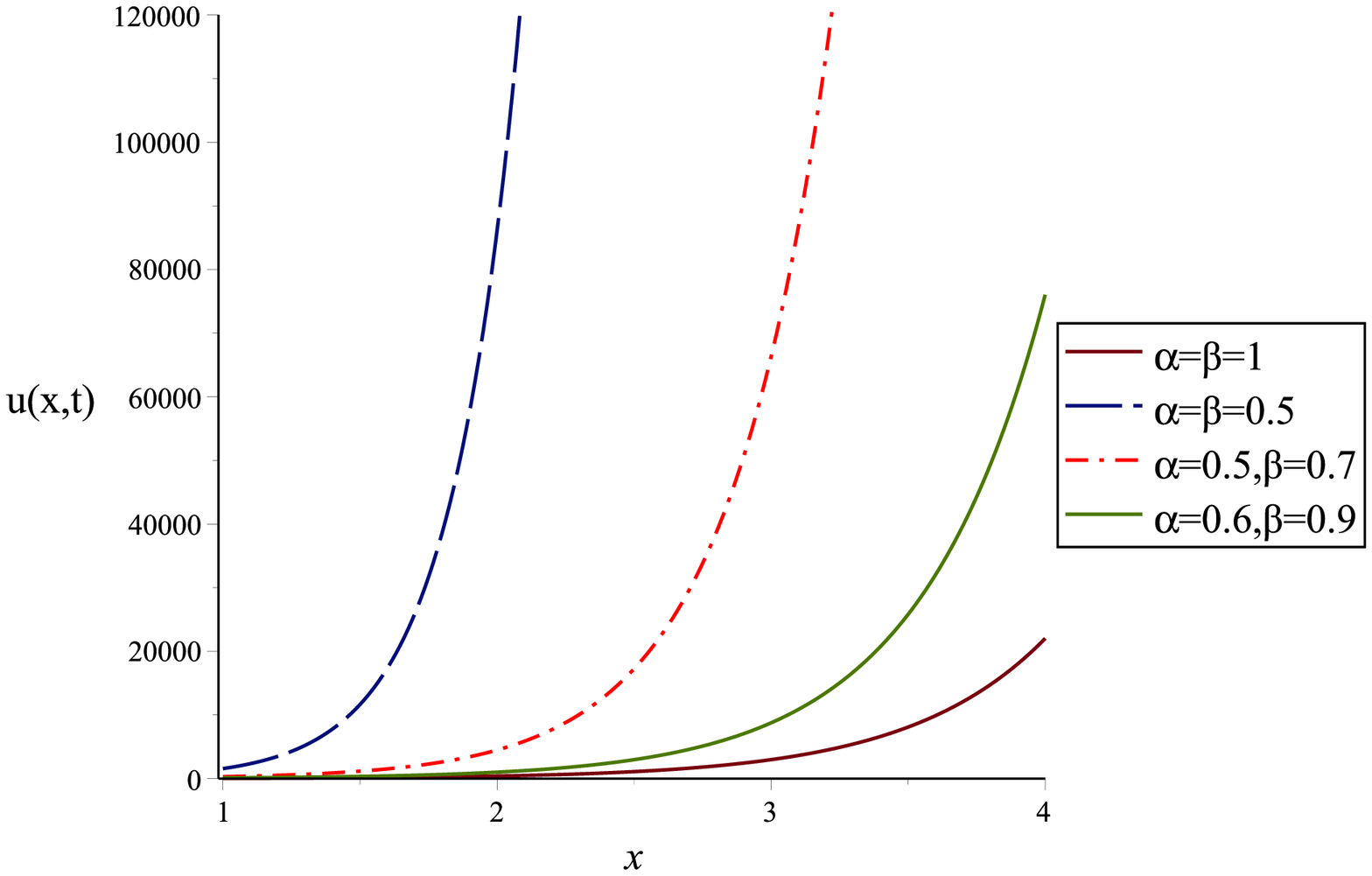}
\footnotesize{Fig.(i) Graphical representation of the solution \eqref{DS6} for $k_0=b_1=1$, $a_{0}=0$, $k=2$, $t=2$, and different values of $\alpha$ and $\beta$.}
 \end{subfigure}\hspace{30pt}
 \begin{subfigure}[]{0.88\textwidth}
  \includegraphics[width=\textwidth]{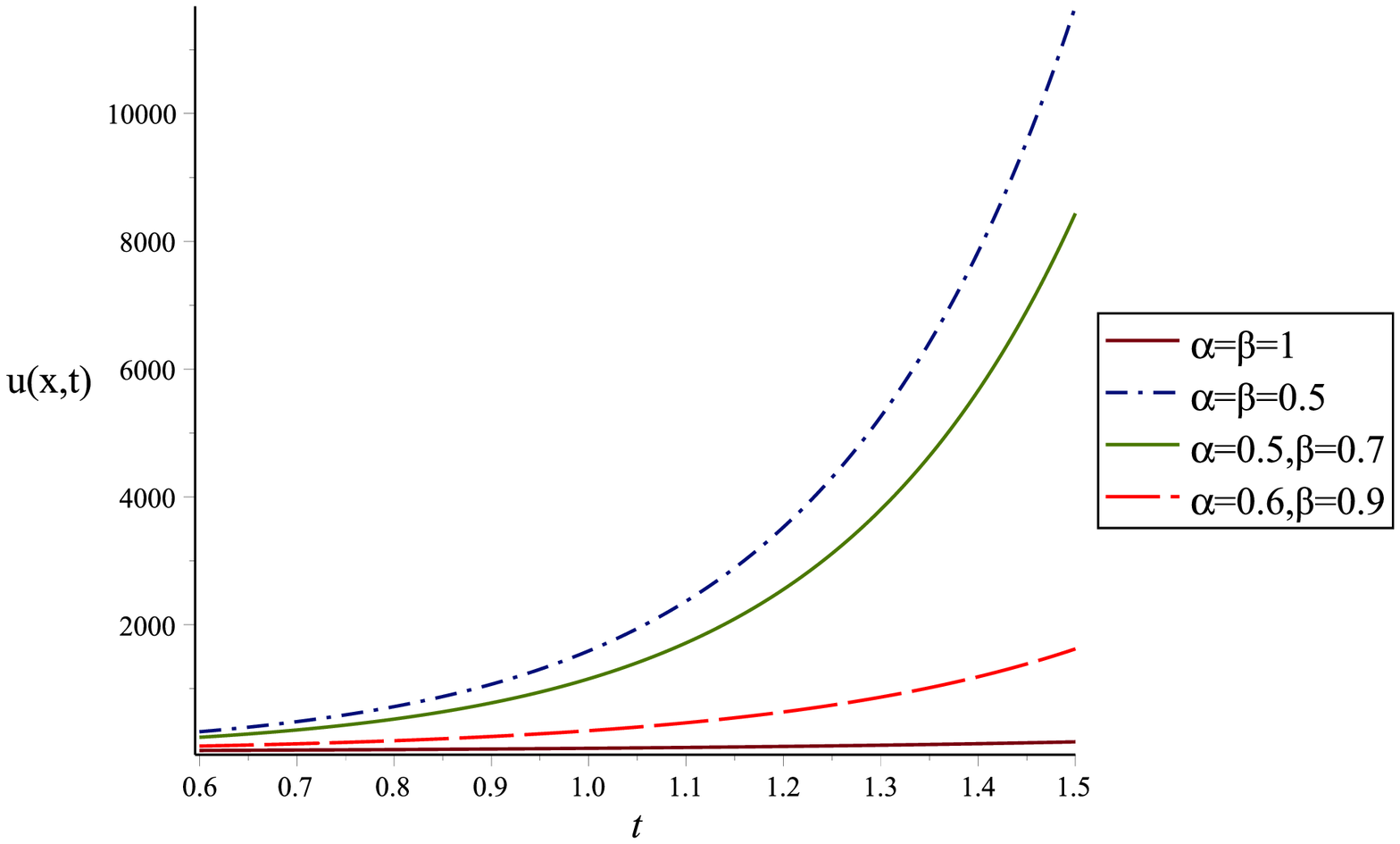}
 \footnotesize{Fig.(j) Graphical representation of the solution $u(x,t)$ of \eqref{DS10} for $c_1=c_2=c_3=b_0=b_1=a_0=k_1=n=1$, $x=2$, and different values of $\alpha$ and $\beta$.}
 \end{subfigure}
\end{center}
\end{figure}
\begin{rem}
Let $p(u)=a_{0}$ and $q(u)=b_{1}u$, $a_{0}$ and $b_{1}$ are arbitrary constants.
Then, equation \eqref{DS1} reduces into
\begin{equation}\label{DS8}
\dfrac{\partial^{\alpha}u}{\partial t^{\alpha}}=a_{0}\dfrac{\partial^{\beta}}{\partial x^{\beta}}\left(\dfrac{\partial^{\beta}u}{\partial x^{\beta}}\right)+b_{1}u
\end{equation}
which admits the following distinct invariant subspaces:\\
$(i)\ \mathcal{W}_{2}=\mathfrak{L}\left\{1,x^{\beta}\right\}$,\\
$(ii)\ \mathcal{W}_{n}=\mathfrak{L}\left\{E_{\beta}(k_{1}x^{\beta}),E_{\beta}(k_{2}x^{\beta}),\dots,E_{\beta}(k_{n}x^{\beta})\right\},$\\
$(iii)\ \mathcal{W}_{n+1}=\mathfrak{L}\left\{1,E_{\beta}(k_{1}x^{\beta}),E_{\beta}(k_{2}x^{\beta}),\dots,E_{\beta}(k_{n}x^{\beta})\right\},$\\
$(iv)\ \mathcal{W}_{n+2}=\mathfrak{L}\left\{1,x^{\beta},E_{\beta}(k_{1}x^{\beta}),E_{\beta}(k_{2}x^{\beta}),\dots,E_{\beta}(k_{n}x^{\beta})\right\}$, $n\in\mathbb{N}$, $k_{i}\in\mathbb{R}$,\ i=1,2,\dots,n.\\
Let us consider the more general invariant subspace (iv), that is,\\
$\mathcal{W}_{n+2}=\mathfrak{L}\left\{1,x^{\beta},E_{\beta}(k_{1}x^{\beta}),E_{\beta}(k_{2}x^{\beta}),\dots,E_{\beta}(k_{n}x^{\beta})\right\}.$
Thus, we obtain the more general exact solution of \eqref{DS8} reads
\begin{equation}\label{DS11}
u(x,t)=(c_{1}+c_{2}x^{\beta})E_{\alpha}(b_{1}t^{\alpha})+\sum\limits^{n}_{r=1}c_{r+2}E_{\alpha}((b_{1}+a_{0}k_{r}^{2})t^{\alpha})E_{\beta}(k_{r}x^{\beta}),
\end{equation}
where $a_{0}$, $b_{1}$, $k_{i} (i=1,2,\dots,n)$ and $c_{s}$ $(s=1,2,\dots,n+2)$ are arbitrary constants. Note that for $b_{0}=0$, equation \eqref{DS10} is exactly same as equation \eqref{DS11}.
\end{rem}
\begin{rem}
Let $p(u)=a_{1}u+a_{0}$ and $q(u)=b_{1}u+b_{0}$, $a_{0}$, $a_{1}$, $b_{0}$ and $b_{1}$ are arbitrary constants.
Then, equation \eqref{DS1} reduces into
\begin{equation}\label{DS9}
\dfrac{\partial^{\alpha}u}{\partial t^{\alpha}}=a_{1}\left(\dfrac{\partial^{\beta}u}{\partial x^{\beta}}\right)^{2}+(a_{1}u+a_{0})\dfrac{\partial^{\beta}}{\partial x^{\beta}}\left(\dfrac{\partial^{\beta}u}{\partial x^{\beta}}\right)+b_{1}u+b_{0}
\end{equation}
which admits two-dimensional invariant subspace $\mathcal{W}_{2}=\mathfrak{L}\left\{1,x^{\beta}\right\}$.
\end{rem}
\subsection{Two-coupled system of time-space fractional diffusion equation}
Consider the following two-coupled system of time-space fractional diffusion equation
\begin{eqnarray}
\begin{aligned}\label{cc1}
&\dfrac{\partial^{\alpha}u_{1}}{\partial t^{\alpha}}= \dfrac{\partial^{\beta}}{\partial x^{\beta}}\left(\dfrac{\partial^{\beta}u_{1}}{\partial x^{\beta}}\right)+\mu u_{2}\dfrac{\partial^{\beta}}{\partial x^{\beta}}\left(\dfrac{\partial^{\beta}u_{1}}{\partial x^{\beta}}\right)+(\mu+\rho)\left(\dfrac{\partial^{\beta}u_{1}}{\partial x^{\beta}}\right)\left(\dfrac{\partial^{\beta}u_{2}}{\partial x^{\beta}}\right)+\rho u_{1}\dfrac{\partial^{\beta}}{\partial x^{\beta}}\left(\dfrac{\partial^{\beta}u_{2}}{\partial x^{\beta}}\right),\\
&\dfrac{\partial^{\alpha}u_{2}}{\partial t^{\alpha}}=\dfrac{\partial^{\beta}}{\partial x^{\beta}}\left(\dfrac{\partial^{\beta}u_{2}}{\partial x^{\beta}}\right)+\lambda\dfrac{\partial^{\beta}}{\partial x^{\beta}}\left(\dfrac{\partial^{\beta}u_{1}}{\partial x^{\beta}}\right)+\gamma u_{1}+\delta u_{2},\ \alpha,\beta\in(0,1].
\end{aligned}
\end{eqnarray}
It is easy to find that coupled system \eqref{cc1} admits the following two-dimensional distinct invariant subspaces:\\
$(i)\ \mathcal{W}_{2}^{1}\times \mathcal{W}_{2}^{2}=\mathfrak{L}\left\{E_{\beta}(kx^{\beta}),E_{\beta}(-kx^{\beta})\right\}\times\mathfrak{L}\left\{E_{\beta}(kx^{\beta}),E_{\beta}(-kx^{\beta})\right\}$ if $\mu=-\rho$,\\
$(ii)\ \mathcal{W}_{2}^{1}\times \mathcal{W}_{2}^{2}=\mathfrak{L}\left\{1,x^{\beta}\right\}\times\mathfrak{L}\left\{1,x^{\beta}\right\}$, \\
because
\begin{eqnarray*}
&&\hat{G}_{1}\left[A_{1}E_{\beta}(kx^{\beta})+A_{2}E_{\beta}(-kx^{\beta}), A_{3}E_{\beta}(kx^{\beta})+A_{4}E_{\beta}(-kx^{\beta})\right]\\
&&=k^{2}\left[A_{1}E_{\beta}(kx^{\beta})+A_{2}E_{\beta}(-kx^{\beta})\right]\in W_{2}^{1},\ \text{if}\ \mu=-\rho,\\
&&\hat{G}_{2}\left[A_{1}E_{\beta}(kx^{\beta})+A_{2}E_{\beta}(-kx^{\beta}), A_{3}E_{\beta}(kx^{\beta})+A_{4}E_{\beta}(-kx^{\beta})\right]\\
&&=\left[(k^{2}+\delta)A_{3}+(\lambda k^{2}+\gamma)A_{1}\right]E_{\beta}(kx^{\beta})+\left[(k^{2}+\delta)A_{4}+(\lambda k^{2}+\gamma)A_{2}\right]E_{\beta}(-kx^{\beta})\in W_{2}^{2},
\end{eqnarray*}
and
\begin{eqnarray*}
\hat{G}_{1}\left[A_{1}+A_{2}x^{\beta}, A_{3}+A_{4}x^{\beta}\right]&=&(\rho+\mu)\left(\Gamma(\beta+1)\right)^{2}A_{2}A_{4}\in W_{2}^{1},\\
\hat{G}_{2}\left[A_{1}+A_{2}x^{\beta}, A_{3}+A_{4}x^{\beta}\right]&=&\left(\gamma A_{1}+\delta A_{3}\right)+\left(\gamma A_{2}+\delta A_{4}\right)x^{\beta}\in W_{2}^{2}.
\end{eqnarray*}
\textbf{Case 1.}\\
 First, we consider the invariant subspace $$\mathcal{W}_{2}^{1}\times \mathcal{W}_{2}^{2}=\mathfrak{L}\left\{E_{\beta}(kx^{\beta}),E_{\beta}(-kx^{\beta})\right\}\times\mathfrak{L}\left\{E_{\beta}(kx^{\beta}),E_{\beta}(-kx^{\beta})\right\}$$ with $\mu=-\rho$, which suggests that equation \eqref{cc1} admits an exact solution in the form
\begin{eqnarray}\label{cc2}
\begin{aligned}
&u_{1}(x,t)=A_{1}(t)E_{\beta}(kx^{\beta})+A_{2}(t)E_{\beta}(-kx^{\beta}),\\
&u_{2}(x,t)=A_{3}(t)E_{\beta}(kx^{\beta})+A_{4}(t)E_{\beta}(-kx^{\beta}),
\end{aligned}
\end{eqnarray}
where $A_{1}(t)$, $A_{2}(t)$, $A_{3}(t)$ and $A_{4}(t)$ are unknown functions to be determined. Substituting \eqref{cc2} in \eqref{cc1}, we get
\begin{eqnarray}
\begin{aligned}\label{cc3}
&\dfrac{d^{\alpha}A_{1}}{dt^{\alpha}}=k^{2}A_{1},\\
&\dfrac{d^{\alpha}A_{2}}{dt^{\alpha}}=k^{2}A_{2},\\
&\dfrac{d^{\alpha}A_{3}}{dt^{\alpha}}=(k^{2}+\delta)A_{3}+(\lambda k^{2}+\gamma)A_{1},\\
&\dfrac{d^{\alpha}A_{4}}{dt^{\alpha}}=(k^{2}+\delta)A_{4}+(\lambda k^{2}+\gamma)A_{2}.
\end{aligned}
\end{eqnarray}
To obtain a nonzero solution, we assume that $A_{i}(0)=a_{i}\neq0$, $i=1,2,3,4$.
Let us first consider $\alpha=\beta=1$. In this case, we have
\begin{eqnarray}\label{cc7}
\begin{aligned}
u_{1}(x,t)=&e^{k^{2}t}\left(a_{1}e^{kx}+a_{2}e^{-kx}\right),\\
u_{2}(x,t)=&\left[a_{3}e^{\delta t}+(\lambda k^{2}+\gamma)a_{1}\left(\dfrac{e^{\delta t}-1}{\delta}\right)\right]e^{k(kt+x)}\\
+&\left[a_{4}e^{\delta t}+(\lambda k^{2}+\gamma)a_{2}\left(\dfrac{e^{\delta t}-1}{\delta}\right)\right]e^{k(kt-x)},\ \delta\neq0,
\end{aligned}
\end{eqnarray}
where $a_{i},k,\lambda,\gamma,\delta\in\mathbb{R}$ $(i=1,2,3,4)$.
Next, we consider $\alpha\in(0,1]$. Applying Laplace transformation technique to linear system \eqref{cc3}, we obtain
\begin{eqnarray*}
\begin{aligned}
A_{1}(t)=&a_{1}E_{\alpha}(k^{2}t^{\alpha}),\\
A_{2}(t)=&a_{2}E_{\alpha}(k^{2}t^{\alpha}),\\
A_{3}(t)=&a_{3}E_{\alpha}\left((k^{2}+\delta)t^{\alpha}\right)+(\lambda k^{2}+\gamma)a_{1}\int\limits^{t}_{0}\tau^{\alpha-1}E_{\alpha,\alpha}\left((k^{2}+\delta)\tau^{\alpha}\right)E_{\alpha}(k^{2}(t-\tau)^{\alpha})d\tau,\\
A_{4}(t)=&a_{4}E_{\alpha}\left((k^{2}+\delta)t^{\alpha}\right)+(\lambda k^{2}+\gamma)a_{2}\int\limits^{t}_{0}\tau^{\alpha-1}E_{\alpha,\alpha}((k^{2}+\delta)\tau^{\alpha})E_{\alpha}(k^{2}(t-\tau)^{\alpha})d\tau.
\end{aligned}
\end{eqnarray*}
 In this case, we obtain an exact solution of the time-space fractional coupled diffusion system \eqref{cc1} associated with the invariant subspace $ \mathcal{W}_{2}^{1}\times \mathcal{W}_{2}^{2}=\mathfrak{L}\left\{E_{\beta}(kx^{\beta}),E_{\beta}(-kx^{\beta})\right\}\times\mathfrak{L}\left\{E_{\beta}(kx^{\beta}),E_{\beta}(-kx^{\beta})\right\}$ if $\rho=-\mu$, as follows
\begin{eqnarray}
\begin{aligned}\label{cc8}
u_{1}(x,t)=&\left[a_{1}E_{\beta}(kx^{\beta})+a_{2}E_{\beta}(-kx^{\beta})\right]E_{\alpha}(k^{2}t^{\alpha}),\\
u_{2}(x,t)=&\left[a_{3}E_{\beta}(kx^{\beta})+a_{4}E_{\beta}(-kx^{\beta})\right]E_{\alpha}((k^{2}+\delta)t^{\alpha})+(\lambda k^{2}+\gamma)\\
&\times\left[a_{1}E_{\beta}(kx^{\beta})+a_{2}E_{\beta}(-kx^{\beta})\right]\int\limits^{t}_{0}\tau^{\alpha-1}E_{\alpha,\alpha}((k^{2}+\delta)\tau^{\alpha})E_{\alpha}(k^{2}(t-\tau)^{\alpha})d\tau,
\end{aligned}
\end{eqnarray}
where $a_{i},k,\lambda,\gamma,\delta\in\mathbb{R}$ $(i=1,2,3,4)$. Observe that if $\alpha=\beta=1$, then equation \eqref{cc8} is exactly same as \eqref{cc7}. The above exact solution \eqref{cc8} for $k=a_1=a_2=a_3=a_4=\delta=\lambda=1$, $\gamma=-1$, $t=2$ and different values of $\alpha$ and $\beta$ is shown in Fig. (k) and Fig. (l).\\
\begin{figure}[h!]
\begin{center}
\begin{subfigure}[]{0.880\textwidth}
  \includegraphics[width=\textwidth]{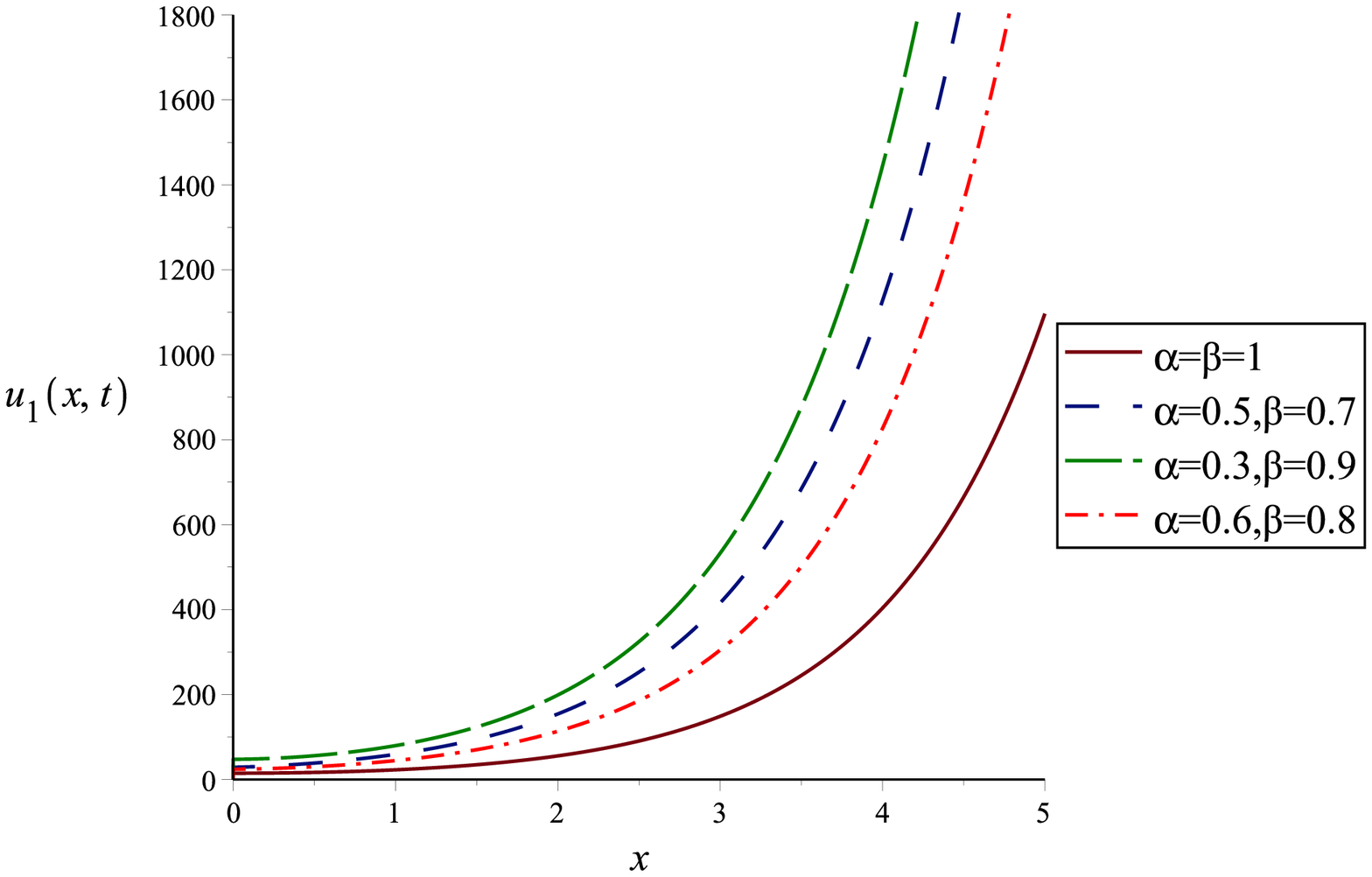}
\footnotesize{Fig.(k) Graphical representation of the solution $u_{1}(x,t)$ of \eqref{cc1} for $k=a_1=a_2=1$, $t=2$, and different values of $\alpha$ and $\beta$.}
 \end{subfigure}\hspace{10pt}
 \begin{subfigure}[]{0.880\textwidth}
  \includegraphics[width=\textwidth]{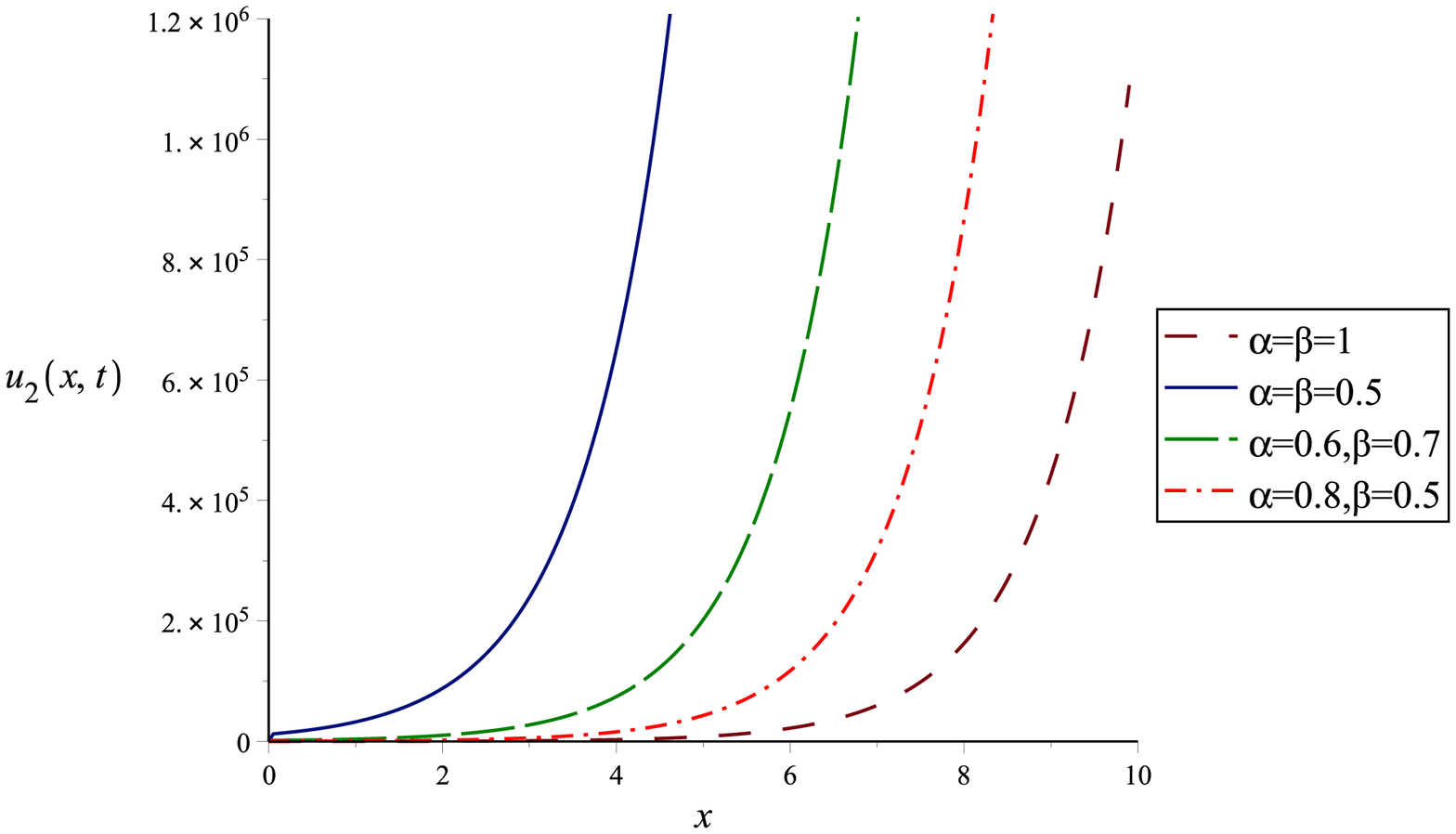}
 \footnotesize{Fig.(l) Graphical representation of the solution $u_{2}(x,t)$ of \eqref{cc1} for $k=a_1=a_2=a_3=a_4=\delta=\lambda=1$, $\gamma=-1$, $t=2$ and different values of $\alpha$ and $\beta$.}
 \end{subfigure}
\end{center}
\end{figure}
\textbf{Case 2.}\\
 Following the above similar procedure, we can derive another exact solution associated with the invariant subspace $\mathcal{W}_{2}^{1}\times \mathcal{W}_{2}^{2}=\mathfrak{L}\left\{1,x^{\beta}\right\}\times\mathfrak{L}\left\{1,x^{\beta}\right\}$. Let $\alpha=\beta=1$. In this case, we have
 \begin{eqnarray}\label{cc9}
\begin{aligned}
u_{1}(x,t)=&k_{1}+(\rho+\mu)k_{2}\left[\dfrac{k_{4}}{\delta}(e^{\delta t}-1)+\dfrac{\gamma k_{2}}{\delta^{2}}(e^{\delta t}-1-\delta t)\right]+k_{2}x,\\
u_{2}(x,t)=&k_{3}e^{\delta t}+\dfrac{k_{1}\gamma}{\delta}(e^{\delta t}-1)+\left[k_{4}e^{\delta t}+\dfrac{\gamma k_{2}}{\delta}\left(e^{\delta t}-1\right)\right]x\\
&+\dfrac{\gamma(\rho+\mu)k_{2}}{\delta^{2}}\left[k_{4}\left(\delta t e^{\delta t}-e^{\delta t}+1\right)+\dfrac{\gamma k_{2}}{\delta}\left(2-2e^{\delta t}+\delta t+\delta t e^{\delta t}\right)\right],
\end{aligned}
\end{eqnarray}
where $k_{i},(1=1,2,3,4),\gamma$ and $\delta(\neq0)$ are arbitrary constants.\\
While $\alpha,\beta\in(0,1]$, it takes
\begin{eqnarray}\label{cc10}
\begin{aligned}
u_{1}(x,t)=&k_{1}+(\Gamma(\beta+1))^{2}(\rho+\mu)k_{2}\left[k_{4}t^{\alpha}E_{\alpha,\alpha+1}(\delta t^{\alpha})+\gamma k_{2}t^{2\alpha}E_{\alpha,2\alpha+1}(\delta t^{\alpha})\right]+k_{2}x^{\beta},\\
u_{2}(x,t)=&k_{3}E_{\alpha}(\delta t^{\alpha})+\gamma k_{1}t^{\alpha}E_{\alpha,\alpha+1}(\delta t^{\alpha})+\left[k_{4}E_{\alpha}(\delta t^{\alpha})+\gamma k_{2}t^{\alpha}E_{\alpha,\alpha+1}(\delta t^{\alpha})\right]x^{\beta}\\
&+\left(\Gamma(\beta+1)\right)^{2}(\rho+\mu)k_{2}k_{4}\gamma\left[\int\limits^{t}_{0}\tau^{\alpha-1}E_{\alpha,\alpha}(\delta \tau^{\alpha})(t-\tau)^{\alpha}E_{\alpha,\alpha+1}(\delta(t-\tau)^{\alpha})d\tau\right]\\
&+\left(\Gamma(\beta+1)\right)^{2}(\rho+\mu)k_{2}^{2}\gamma^{2}\left[\int\limits^{t}_{0}\tau^{\alpha-1}E_{\alpha,\alpha}(\delta \tau^{\alpha})(t-\tau)^{2\alpha}E_{\alpha,2\alpha+1}(\delta(t-\tau)^{\alpha})d\tau\right],
\end{aligned}
\end{eqnarray}
where $k_{i},\gamma$, $\delta(\neq0)\in\mathbb{R}$ $(1=1,2,3,4)$.
Note that for $\alpha=\beta=1$, equations \eqref{cc10} are exactly same as \eqref{cc9}. The above exact solution \eqref{cc10} for different values of $\alpha$ and $\beta$ is shown in Fig. (m) and Fig. (n).

\begin{figure}[h!]
\begin{center}
\begin{subfigure}[]{0.880\textwidth}
  \includegraphics[width=\textwidth]{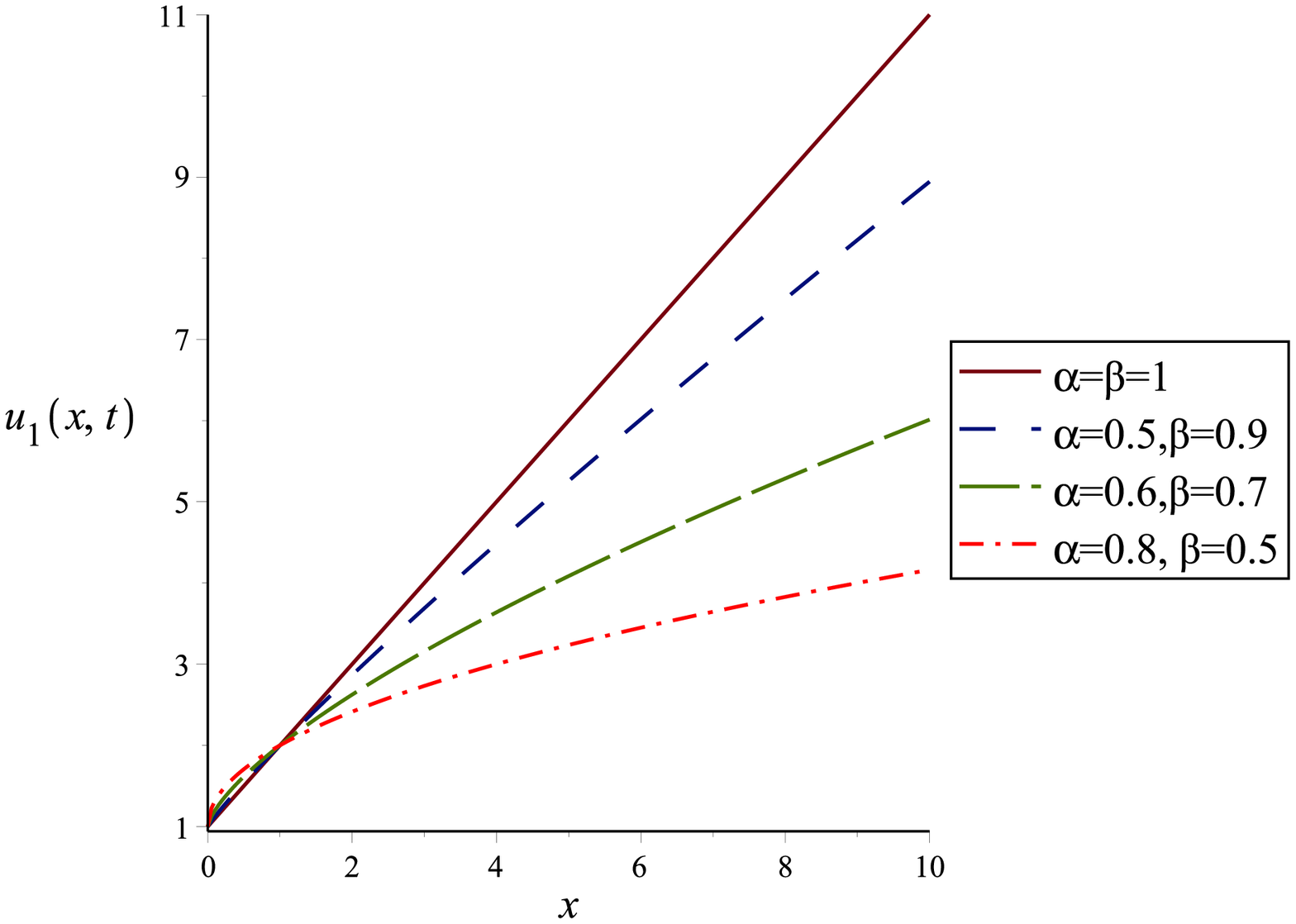}
\footnotesize{Fig.(m) Graphical representation of the solution $u_{1}(x,t)$ of \eqref{cc1} for $k_1=k_2=k_4=\rho=\delta=\gamma=1$, $\mu=-1$, $t=2$ and different values of $\alpha$ and $\beta$.}
 \end{subfigure}\hspace{10pt}
 \begin{subfigure}[]{0.880\textwidth}
  \includegraphics[width=\textwidth]{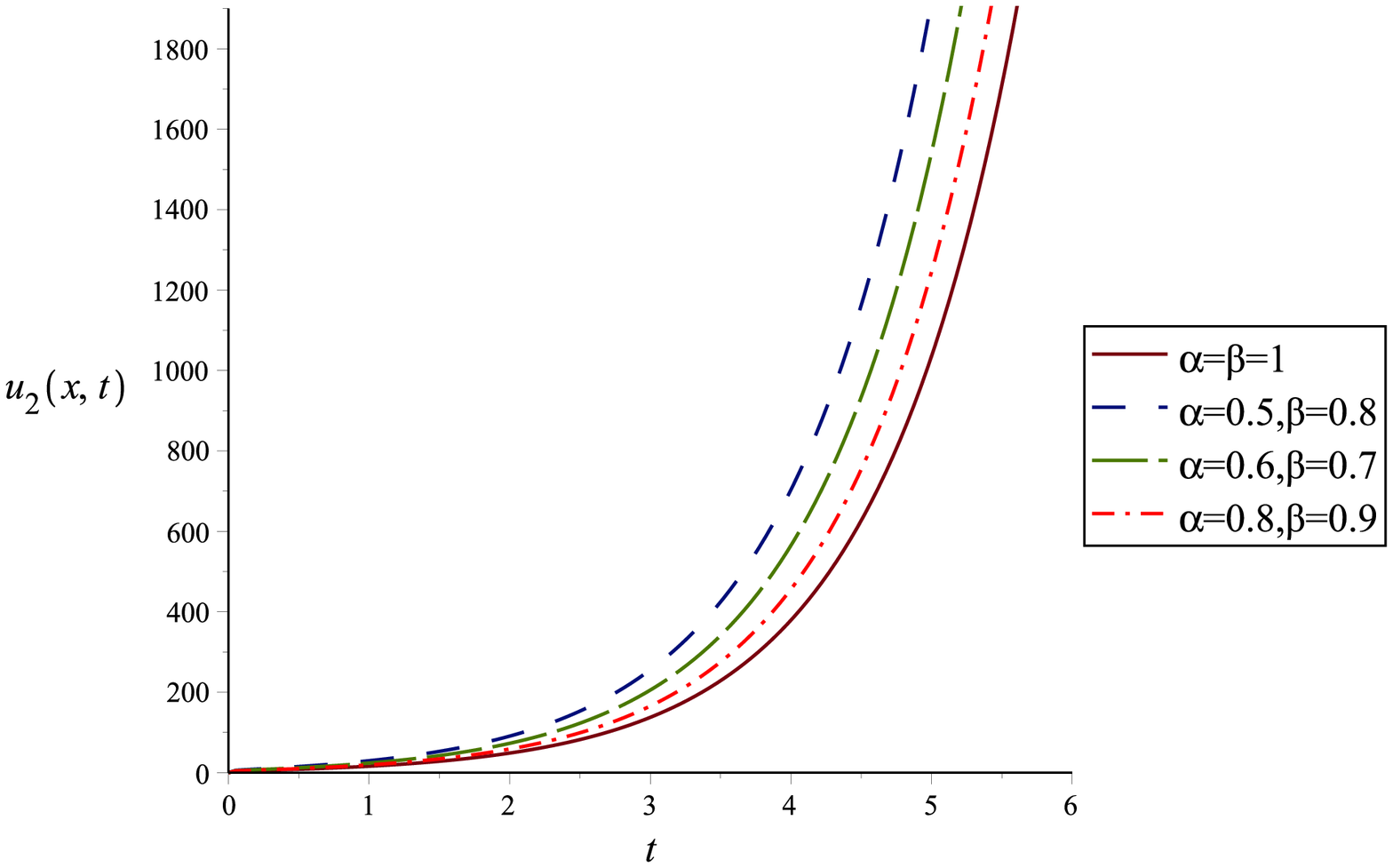}
 \footnotesize{Fig.(n) Graphical representation of the solution $u_{2}(x,t)$ of \eqref{cc1} for $k_1=k_2=k_3=k_4=\rho=\delta=\gamma=1$, $\mu=-1$, $x=2$, and different values of $\alpha$ and $\beta$.}
 \end{subfigure}
\end{center}
\end{figure}

\vspace{4cm}
\clearpage
\subsection{Two-coupled system of time-space fractional nonlinear model of stationary transonic plane-parallel gas flow}
Consider the following two-coupled system of time-space fractional PDEs 
\begin{eqnarray}
\begin{aligned}\label{eqc1}
\dfrac{\partial^{\alpha}u_{1}}{\partial t^{\alpha}}&=\dfrac{\partial^{\beta}u_{2}}{\partial x^{\beta}},\\
\dfrac{\partial^{\alpha}u_{2}}{\partial t^{\alpha}}&=-u_{1}\dfrac{\partial^{\beta}u_{1}}{\partial x^{\beta}},\ t>0,\ \alpha,\beta\in(0,1].
\end{aligned}
\end{eqnarray}
 which describes the model of stationary transonic plane-parallel gas flow \cite{gs,jef}. The symmetries of the above-coupled system \eqref{eqc1} with $\alpha=1$ and $\beta=1$ was presented in \cite{lv,jef}.
The system \eqref{eqc1} with $\alpha=\beta=1$ was discussed through the invariant subspace method by Galaktionov and Svirshchevskii \cite{gs}.

It is easy to find that \eqref{eqc1} admits an invariant subspace $\mathcal{W}_{2}^{1}\times \mathcal{W}_{2}^{2}=\mathfrak{L}\left\{1,x^{\beta}\right\}\times\mathfrak{L}\left\{1,x^{\beta}\right\}$ because
\begin{eqnarray*}
\hat{G}_{1}\left[A_{1}+A_{2}x^{\beta}, A_{3}+A_{4}x^{\beta}\right]&=&A_{4}\Gamma(\beta+1)\in W_{2}^{1},\\
\hat{G}_{2}\left[A_{1}+A_{2}x^{\beta}, A_{3}+A_{4}x^{\beta}\right]&=&-A_{2}\Gamma(\beta+1)\left[A_{1}+A_{2}x^{\beta}\right]\in W_{2}^{2}.
\end{eqnarray*}
Thus, we can write the exact solution of  equation \eqref{eqc1} in the form
\begin{equation}\label{eqc2}
u_{1}(x,t)=A_{1}(t)+A_{2}(t)x^{\beta},\ u_{2}(x,t)=A_{3}(t)+A_{4}(t)x^{\beta}.
\end{equation}
Let us first, consider $\alpha=\beta=1$. In this case, we yield an exact solution of the above system \eqref{eqc1} associated with $\mathcal{W}_{2}^{1}\times \mathcal{W}_{2}^{2}=\mathfrak{L}\left\{1,x^{\beta}\right\}\times\mathfrak{L}\left\{1,x^{\beta}\right\}$ as follows
\begin{eqnarray}
\begin{aligned}\label{eqc7}
u_{1}(x,t)&=k_{1}+k_{4}t-k_{2}^{2}\dfrac{t^{2}}{2}+k_{2}x,\\
u_{2}(x,t)&=k_{3}-k_{2}\left(k_{1}+\dfrac{k_{4}}{2}t-\dfrac{k_{2}^{2}}{6}t^{2}\right)t+\left(k_{4}-k_{2}^{2}t\right)x,\ k_{i}\in\mathbb{R}, i=1,2,3.4.
\end{aligned}
\end{eqnarray}
 Next, we consider $\alpha,\beta\in(0,1]$.
In this case, we have
\begin{eqnarray}
\begin{aligned}\label{eqcs}
u_{1}(x,t)=&k_{1}+k_{4}\Gamma(\beta+1)\dfrac{t^{\alpha}}{\Gamma(\alpha+1)}-k_{2}^{2}\left(\Gamma(\beta+1)\right)^{2}\dfrac{t^{2\alpha}}{\Gamma(2\alpha+1)}+k_{2}x^{\beta},\\
u_{2}(x,t)=&k_{3}-k_{1}k_{2}\Gamma(\beta+1)\dfrac{t^{\alpha}}{\Gamma(\alpha+1)}-k_{2}k_{4}\left(\Gamma(\beta+1)\right)^{2}\dfrac{t^{2\alpha}}{\Gamma(2\alpha+1)}\\
&+k_{2}^{3}\left(\Gamma(\beta+1)\right)^{3}\dfrac{t^{3\alpha}}{\Gamma(3\alpha+1)}+\left[{k_{4}}-k_{2}^{2}\Gamma(\beta+1)\dfrac{t^{\alpha}}{\Gamma(\alpha+1)}\right]x^{\beta},\ \alpha,\beta\in(0,1],
\end{aligned}
\end{eqnarray}
where $k_{1}, k_{2}, k_{3}$ and $k_{4}$ are non-zero arbitrary constants. Observe that for $\alpha=\beta=1$, equations \eqref{eqcs} are exactly same as equations \eqref{eqc7}. The above  exact solution \eqref{eqcs} for $k_1=2$, $k_2=k_3=k_4=1$, $x=2$, and different values of $\alpha$ and $\beta$ is shown in Fig. (o) and Fig. (p).
\begin{figure}[h!]
\begin{center}
\begin{subfigure}[]{0.94\textwidth}
  \includegraphics[width=\textwidth]{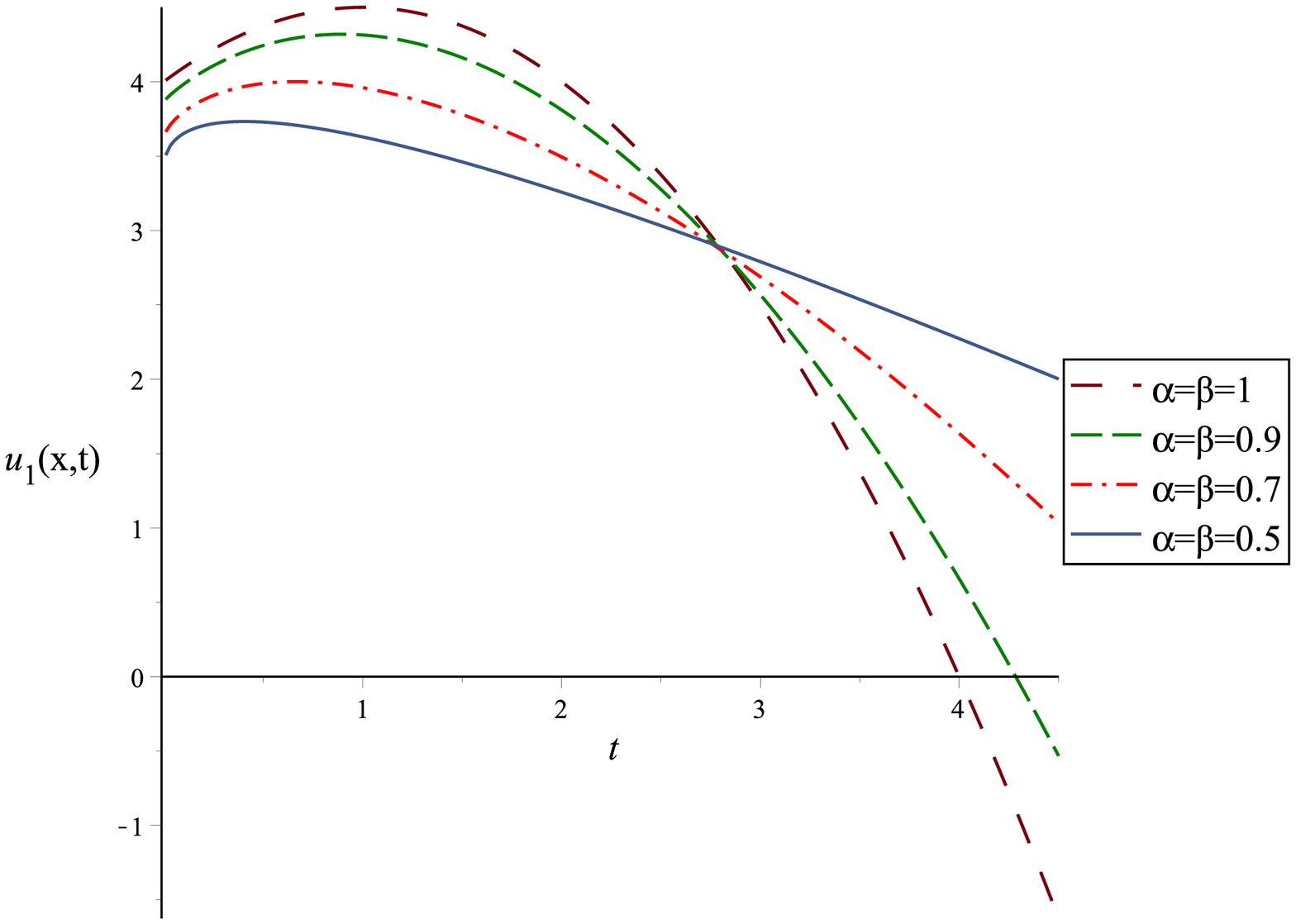}
\footnotesize{Fig.(o) Graphical representation of the solution $u_{1}(x,t)$ of \eqref{eqc1} for $k_1=2$, $k_2=k_4=1$, $x=2$, and different values of $\alpha$ and $\beta$.}
 \end{subfigure}\hspace{10pt}
 \begin{subfigure}[]{0.94\textwidth}
  \includegraphics[width=\textwidth]{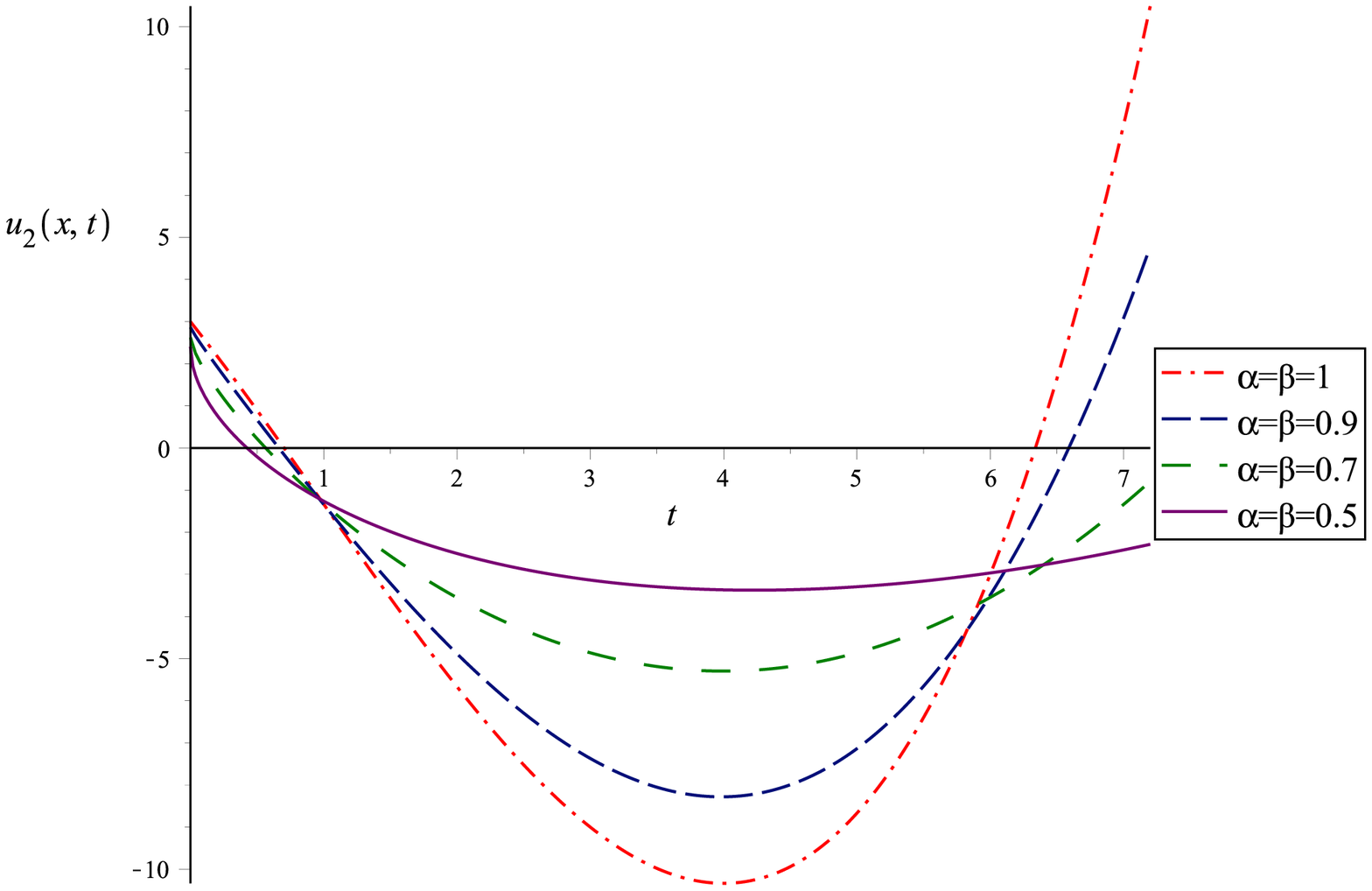}
 \footnotesize{Fig.(p) Graphical representation of the solution $u_{2}(x,t)$ of \eqref{eqc1} for $k_1=2$, $k_2=k_3=k_4=1$, $x=2$, and different values of $\alpha$ and $\beta$.}
 \end{subfigure}
\end{center}
\end{figure}

\subsection{Three-coupled system of time-space fractional generalized Hirota-Satsuma-KdV equation}
Consider the following three-coupled system of time-space fractional generalized Hirota-Satsuma-KdV equation
\begin{eqnarray}\label{gkdv}
\begin{aligned}
\dfrac{^{RL}\partial^{\alpha}u_{1}}{\partial t^{\alpha}}&=\dfrac{1}{2}\dfrac{\partial^{\beta+2}u_{1}}{\partial x^{\beta+2}}-3 u_{1}\dfrac{\partial^{\beta} u_{1}}{\partial x^{\beta}}+3u_{3}\dfrac{\partial^{\beta}u_{2}}{\partial x^{\beta}}+3u_{2}\dfrac{\partial^{\beta}u_{3}}{\partial x^{\beta}},\\
\dfrac{^{RL}{\partial}^{\alpha}u_{2}}{\partial t^{\alpha}}&=-\dfrac{\partial^{\beta+2}u_{2}}{\partial x^{\beta+2}}+3 u_{1}\dfrac{\partial^{\beta} u_{2}}{\partial x^{\beta}},\\
\dfrac{^{RL}{\partial}^{\alpha}u_{3}}{\partial t^{\alpha}}&=-\dfrac{\partial^{\beta+2}u_{3}}{\partial x^{\beta+2}}+3 u_{1} \dfrac{\partial^{\beta} u_{3}}{\partial x^{\beta}},\   \ t>0,  \alpha,\beta\in(0,1].
\end{aligned}
\end{eqnarray}
This system with $\alpha=\beta=1$, describes an interaction of two long waves with different dispersion relations \cite{gkd}. The above system \eqref{gkdv} with $\beta=1$ was discussed through the invariant subspace method in \cite{pra2}.
 Let us consider the dimension $n_{1}=n_{2}=n_{3}=2$. For this case, the equation \eqref{gkdv} admits the following invariant subspace $$\mathcal{W}_{2,2,2}=\mathcal{W}^{1}_{2}\times\mathcal{W}^{2}_{2}\times\mathcal{W}^{3}_{2}=\mathfrak{L}\left\{1,x^{\beta}\right\}\times\mathfrak{L}\left\{1,x^{\beta}\right\}\times\mathfrak{L}\left\{1,x^{\beta}\right\}.$$
Thus, we can write the exact solution of system \eqref{gkdv} in the form
\begin{eqnarray}
\begin{aligned}\label{gkdv1}
u_{1}(x,t)&=A_{1}(t)+A_{2}(t)x^{\beta},\\
u_{2}(x,t)&=A_{3}(t)+A_{4}(t)x^{\beta},\\
u_{3}(x,t)&=A_{5}(t)+A_{6}(t)x^{\beta},
\end{aligned}
\end{eqnarray}
where $A_{i}(t)$, $(i=1,\dots,6)$ are satisfy the following system of FODEs
\begin{eqnarray}
\begin{aligned}\label{111}
\dfrac{d^{\alpha}A_{1}(t)}{dt^{\alpha}}&=3\Gamma(\beta+1)\left[-A_{1}(t)A_{2}(t)+A_{3}(t)A_{6}(t)+A_{4}(t)A_{5}(t)\right],\\
\dfrac{d^{\alpha}A_{2}(t)}{dt^{\alpha}}&=3\Gamma(\beta+1)\left[-A_{2}^{2}(t)+2A_{4}(t)A_{6}(t)\right],\\
\dfrac{d^{\alpha}A_{3}(t)}{dt^{\alpha}}&=3\Gamma(\beta+1)A_{1}(t)A_{4}(t),\\
\dfrac{d^{\alpha}A_{4}(t)}{dt^{\alpha}}&=3\Gamma(\beta+1)A_{2}(t)A_{4}(t),\\
\dfrac{d^{\alpha}A_{5}(t)}{dt^{\alpha}}&=3\Gamma(\beta+1)A_{1}(t)A_{6}(t),\\
\dfrac{d^{\alpha}A_{6}(t)}{dt^{\alpha}}&=3\Gamma(\beta+1)A_{2}(t)A_{6}(t).
\end{aligned}
\end{eqnarray}
Solving the above equations \eqref{111}, we have
\begin{eqnarray}
\begin{aligned}\label{3s2}
u_{1}(x,t)&=\dfrac{1}{3\Gamma(\beta+1)}\left(\dfrac{\Gamma(1-\alpha)}{\Gamma(1-2\alpha)}\right)\left[\dfrac{k_{1}}{k_{2}}+x^{\beta}\right]t^{-\alpha},\\
u_{2}(x,t)&=\dfrac{1}{9k_{2}(\Gamma(\beta+1))^{2}}\left(\dfrac{\Gamma(1-\alpha)}{\Gamma(1-2\alpha)}\right)^{2}\left[\dfrac{k_{1}}{k_{2}}+x^{\beta}\right]t^{-\alpha},\\
u_{3}(x,t)&=k_{1}t^{-\alpha}+k_{2}x^{\beta}t^{-\alpha},\ k_{2}\neq0,\ \alpha\in(0,1)-\left\{\frac{1}{2}\right\},\ \beta\in(0,1],\ k_1,k_2\in\mathbb{R}.
\end{aligned}
\end{eqnarray}
The above exact solution \eqref{3s2} for $k_1=2$, $k_2=1$, $x=2$, and different values of $\alpha$ and $\beta$ is shown in Fig. (q), Fig. (r) and Fig. (s).
 We would like to mention that when $\beta=1$, the above solution \eqref{3s2} is exactly the same as given in \cite{pra2}.

\begin{figure}[h!]
\begin{center}
\begin{subfigure}[]{0.48\textwidth}
  \includegraphics[width=\textwidth]{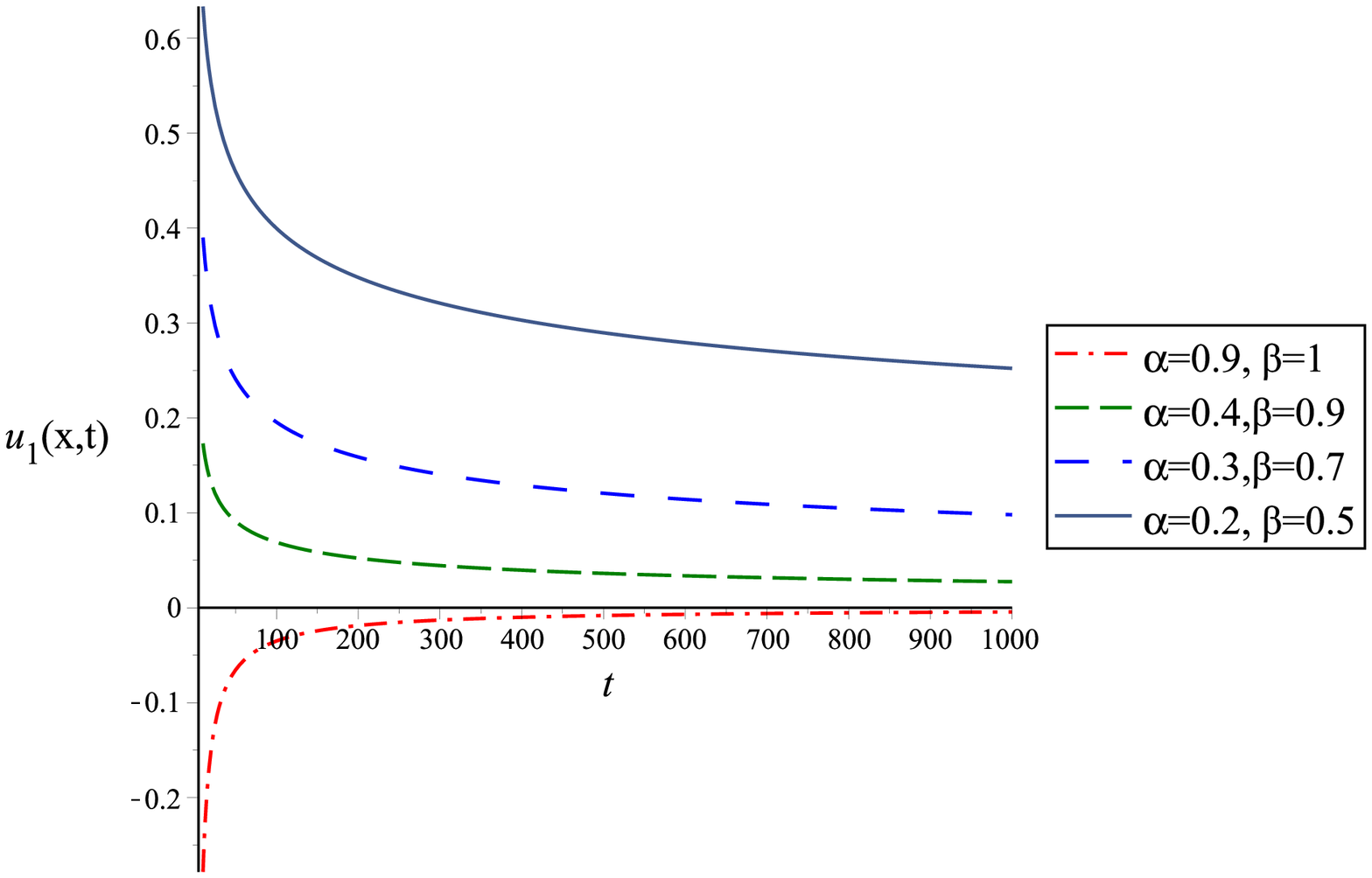}
\footnotesize{Fig.(q) Graphical representation of the solution $u_{1}(x,t)$ of \eqref{gkdv} for $k_1=2$, $k_2=1$, $x=2$, and different values of $\alpha$ and $\beta$.}
 \end{subfigure}\hspace{10pt}
 \begin{subfigure}[]{0.48\textwidth}
  \includegraphics[width=\textwidth]{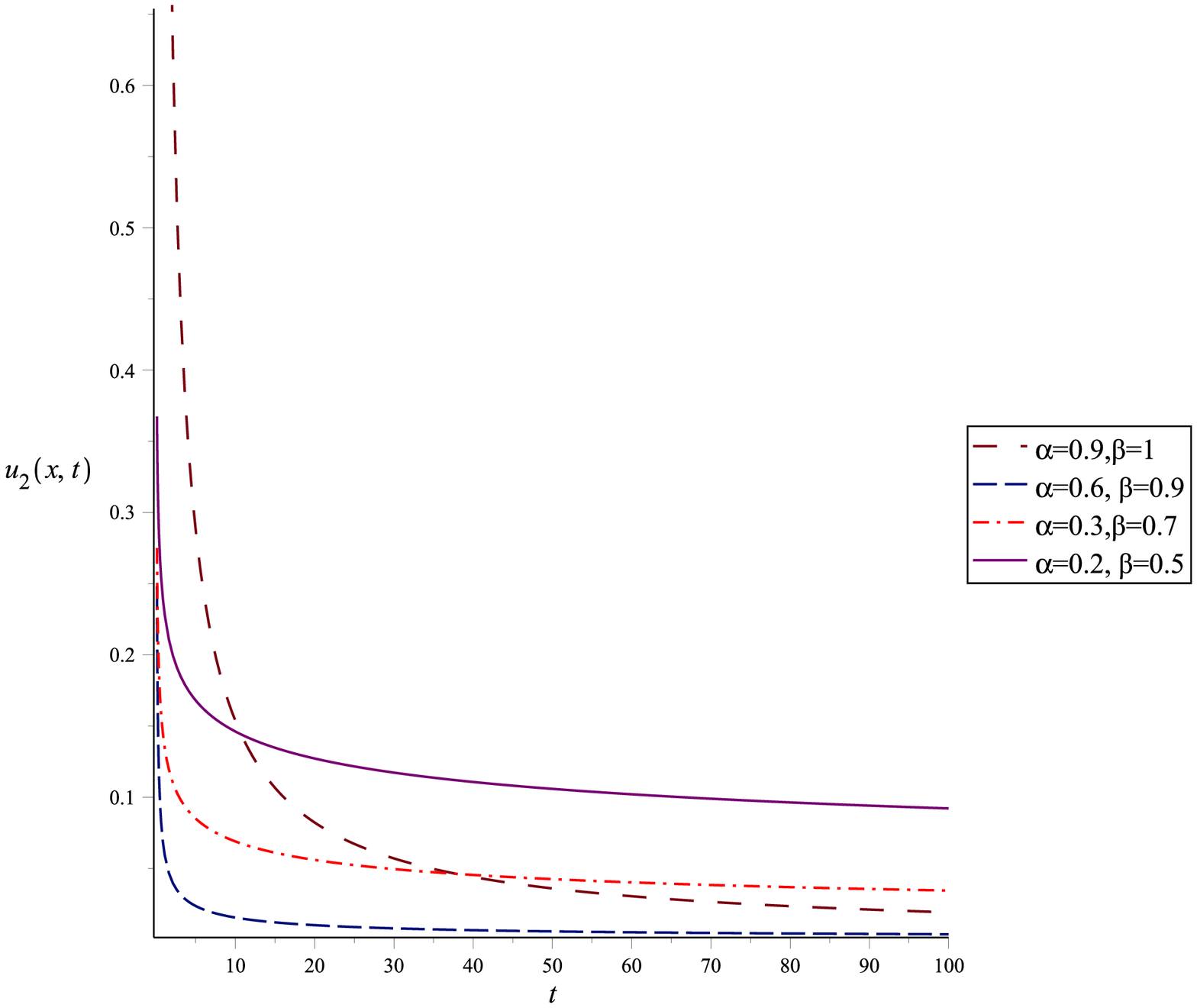}
 \footnotesize{Fig.(r) Graphical representation of the solution $u_{2}(x,t)$ of \eqref{gkdv} for $k_1=2$, $k_2=1$, $x=2$, and different values of $\alpha$ and $\beta$.}
 \end{subfigure}\hspace{10pt}
 \begin{subfigure}[]{0.93\textwidth}
  \includegraphics[width=\textwidth]{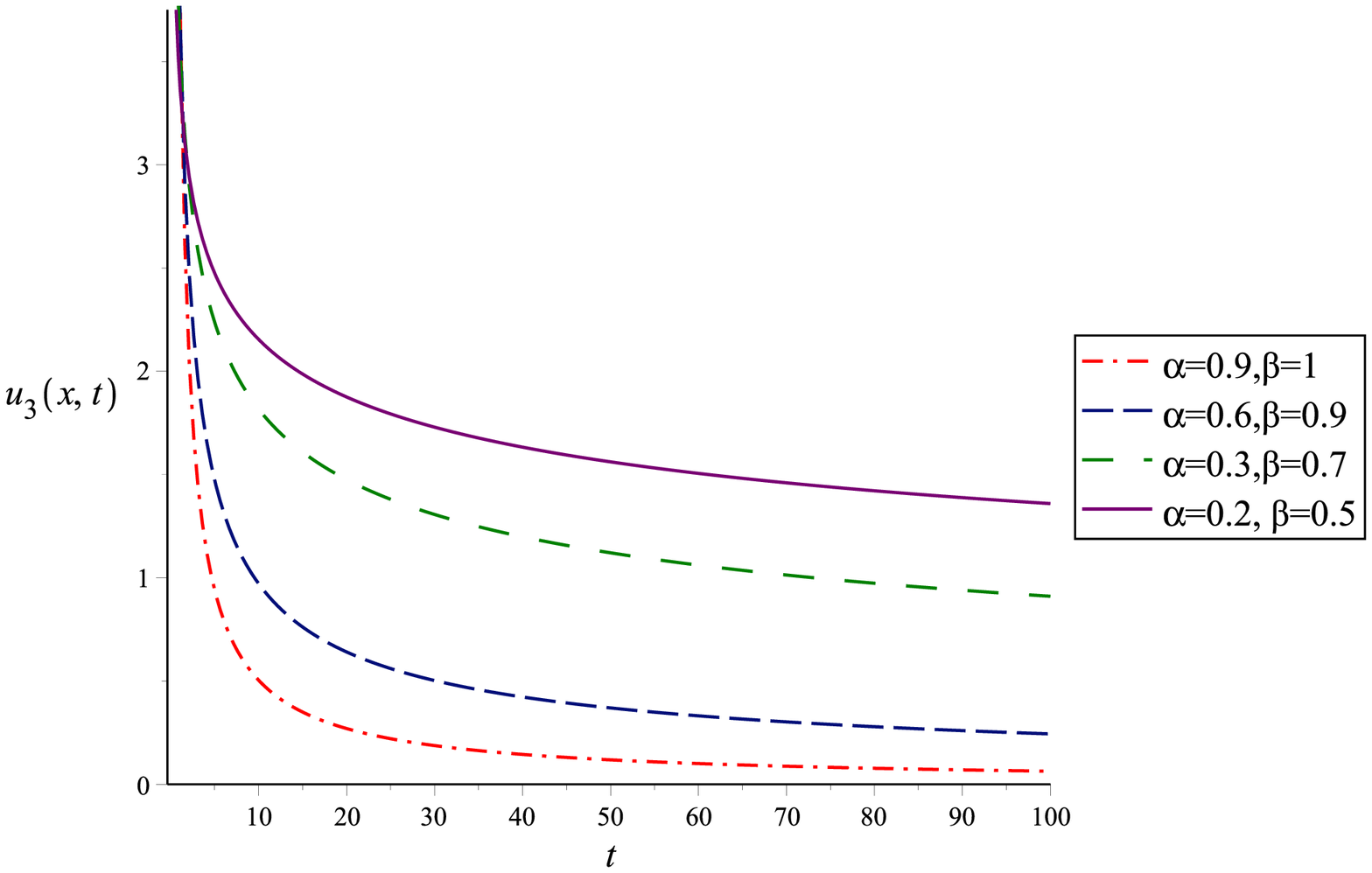}
 \footnotesize{Fig.(s) Graphical representation of the solution $u_{3}(x,t)$ of \eqref{gkdv} for $k_1=2$, $k_2=1$, $x=2$, and different values of $\alpha$ and $\beta$.}
 \end{subfigure}
\end{center}
\end{figure}

\begin{nt}
We would like to point out that the property \eqref{pp2} holds only for $\mu>n-1$. Hence $\dfrac{d^{\alpha}(t^{-\alpha})}{dt^{\alpha}}=\dfrac{\Gamma(1-\alpha)}{\Gamma(1-2\alpha)}t^{-2\alpha}$ is not valid in Caputo fractional derivative of order $\alpha\in(0,1)$, but it holds for R-L fractional derivative of order $\alpha\in(0,1)$:
$$\dfrac{d^{\alpha}(t^{-\alpha})}{dt^{\alpha}}=\dfrac{\Gamma(1-\alpha)}{\Gamma(1-2\alpha)}t^{-2\alpha},\ \mu=-\alpha>-1.$$
\end{nt}
\section{Conclusion}
In this article, we have presented how the invariant subspace method can be extended to a scalar and coupled system of time-space FPDEs. Also, we have explicitly presented how the scalar and coupled system of time-space FPDEs admit more than one invariant subspace which in turn helps to derive more than one exact solution. The applicability of the method was illustrated through scalar and coupled system of time-space FPDEs given in \eqref{doce}, \eqref{eqsr1}, \eqref{DS1}, \eqref{cc1}, \eqref{eqc1} and \eqref{gkdv}.
Using the invariant subspace method, the scalar and coupled system of time-space FPDEs are reduced to the system of FODEs. The obtained reduced system of FODEs can be solved by well-known analytical methods.
The obtained exact solutions can be expressed in terms of the polynomial and well-known Mittag-Leffler functions.
These investigations show that the invariant subspace method is a very effective tool to derive exact solutions for the scalar and coupled system of nonlinear time-space FPDEs.
\section*{Acknowledgments}
The author also would like to thank R. Sahadevan, UGC Emeritus Professor, Ramanujan Institute for Advanced Study in Mathematics, University of Madras, Chennai, India, for his helpful comments and suggestions for significant improvement of the manuscript.

\end{document}